\documentclass[12pt,a4paper]{amsart}
\usepackage[margin=.8in]{geometry}
\usepackage{graphicx} 
\usepackage{tikz}
\usetikzlibrary{shapes}
\usetikzlibrary{intersections}
\usetikzlibrary{svg.path}
\usetikzlibrary{decorations.markings}
\usetikzlibrary{hobby}
\usepackage{amsmath,mathrsfs}
\usepackage{physics}
\usepackage{amssymb,amsrefs}
\usepackage{bm}
\usepackage{dsfont}
\usepackage{hyperref}

\theoremstyle{plain} 
\newtheorem{theorem}{Theorem}
\newtheorem*{theorem*}{Theorem}
\newtheorem{lemma}[theorem]{Lemma}
\newtheorem*{lemma*}{Lemma}

\newtheorem*{corollary*}{Corollary}
\newtheorem{proposition}[theorem]{Proposition}
\newtheorem*{proposition*}{Proposition}

\newtheorem*{definition*}{Definition}

\newtheorem*{example*}{Exemple}

\newtheorem*{remark*}{Remark}
\newtheorem*{remarks*}{Remarks}

\newcommand{\E}{\mathbb{E}}
\newcommand{\PP}{\mathbb{P}}

\newcommand{\C}{\mathbb{C}}
\newcommand{\CC}{\mathbb{C}}

\newcommand{\DD}{\mathbb{D}}
\renewcommand{\dd}{\mathrm{d}}
\newcommand{\Ov}{\mathscr{O}}
\newcommand{\la}{\lambda}

\newcommand{\beq}{\begin{equation}}
\newcommand{\eeq}{\end{equation}}

\title{On stability of outliers from the circular law}
\author{Guillaume Dubach}
\author{Aniss Fares}
\author{Lilou Thomas}
\address{Centre de Mathématiques Laurent Schwartz, \'Ecole Polytechnique, Institut Polytechnique de Paris, Palaiseau, France}
\email{guillaume.dubach@polytechnique.edu}
\thanks{The authors gratefully acknowledge support from the Fondation de l'Ecole polytechnique, as well as from the Agence Nationale de la Recherche: ANR-25-CE40-5672 and ANR-25-CE40-1380.}

\date{June 2026}

\begin{document}

\begin{abstract}
    This work investigates the stability of outliers from the circular law, via the convergence of their associated diagonal overlaps between eigenvectors -- also known as the squared eigenvalue condition numbers. We consider and compare two paradigmatic cases, namely:
    \begin{enumerate}
    \item The Complex Ginibre Ensemble conditioned on the existence of an outlier;
    \item The outlier induced by a rank-one Hermitian perturbation of a complex Ginibre matrix.
    \end{enumerate}
    In both cases, we prove almost sure convergence towards a specific constant that only depends on the radius of the outlier and its status -- either conditioned or induced. 
    These results can be generalized to other complex integrable ensembles with the same techniques, and complement our understanding of eigenvalue stability in non-Hermitian ensembles.
\end{abstract}

\maketitle


\begin{figure}[h!]
    \centering
    \includegraphics[width=0.5\linewidth]{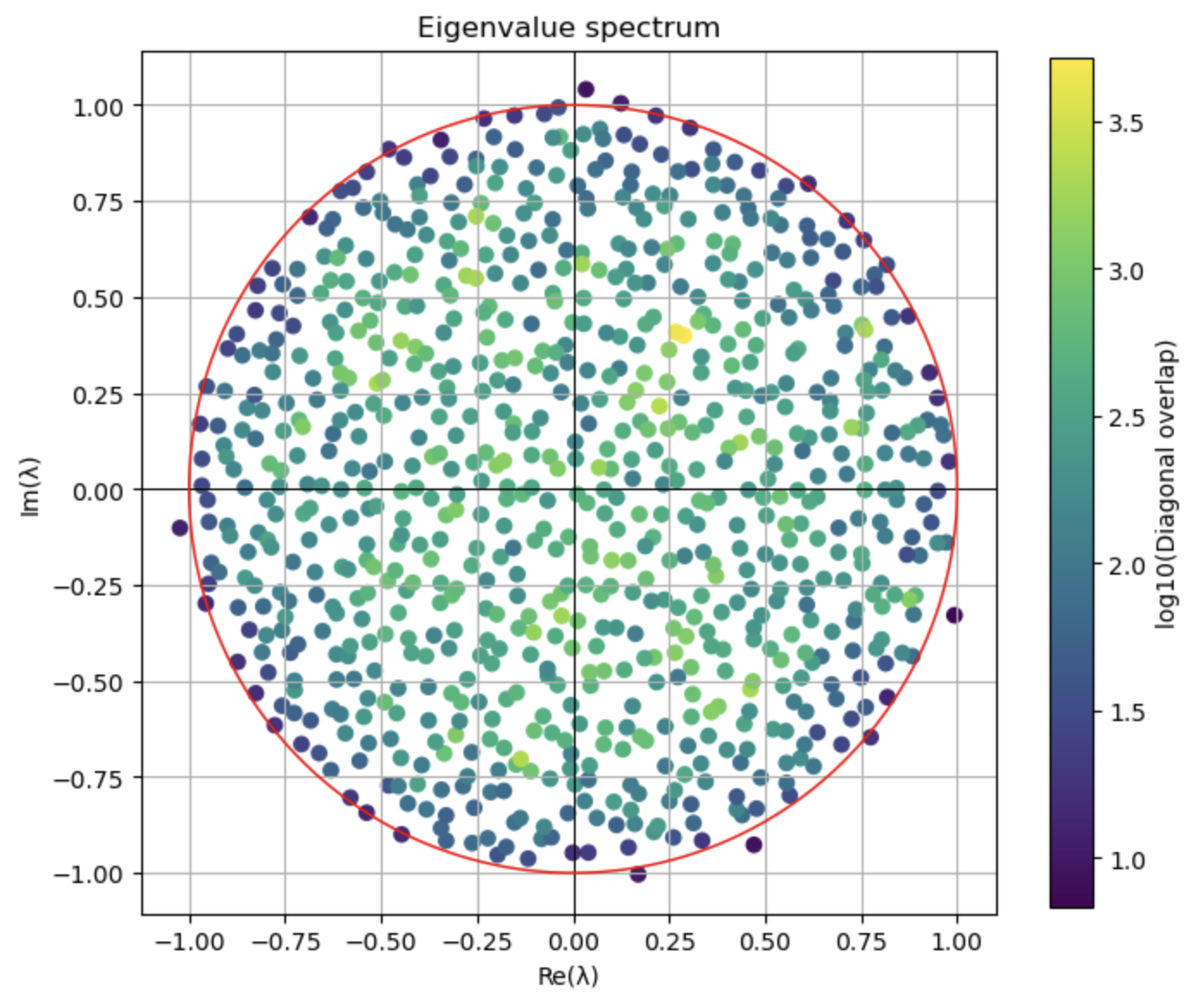}
    \caption{The spectrum of a Ginibre matrix for $N=800$, with the relative size of diagonal overlaps indicated on a color scale. This illustrates a `freezing at the edge' \cite{FreezingEdge} phenomenon, edge eigenvalues being more stable than bulk eigenvalues. The purpose of this work is to prove that outliers are even more stable than edge eigenvalues.}
    \label{fig:spectrum_Ginibre}
\end{figure}


\tableofcontents

\newpage
\section{Introduction}

A key question in random matrix theory is to quantify the stability of an eigenvalue under a perturbation of the matrix; in non-Hermitian ensembles especially, eigenvalues are expected to be less stable -- in various ways -- than in Hermitian settings. Since the seminal works of Chalker and Mehlig \cites{ChalkerMehlig, MehligChalker1, MehligChalker2}, eigenvalue stability in non-normal random matrix ensembles is known to be related to the so-called matrix of \textit{overlaps between eigenvectors}, defined by
\begin{equation}\label{def_overlaps}
    \Ov_{ij} := \langle L_i | L_j \rangle \langle R_j | R_i \rangle,
\end{equation}
where $(L_i)$ and $(R_j)$ are two families of left and right eigenvectors respectively, normalized according to the bi-orthogonality condition $\langle L_i | R_j \rangle = \delta_{ij}$. 
The way in which the matrix of overlaps quantifies eigenvalue stability is for instance made clear by the following fact\footnote{This fact can be proved by a direct computation, which is consistent with other perturbative results appearing in the literature, for classical random perturbations \cites{ChalkerMehlig, MehligChalker1, MehligChalker2} or stochastic matrix evolution \cites{BourgadeDubach,GrelaWarchol}. }:

\begin{proposition}(Gaussian perturbation of eigenvalues \cite{Thomas2026})\label{prop1}
Let $M \in \mathcal{M}_N(\mathbb{C})$ be a matrix with simple spectrum $\{ \la_1, \dots, \la_N \}$ and let $\Ov$ be the matrix of overlaps between eigenvectors of $M$. If $X$ is a matrix with i.i.d. standard complex Gaussian entries, and if $M(t) = M+ t X$, then the eigenvalue trajectories $\{ \la_1(t), \dots, \la_N(t) \}$ are smooth on some neighborhood of $t=0$, and the derivatives at $t=0$ form a Gaussian vector with covariance $\Ov$ : 
    $$ (\lambda_1'(0), ..., \lambda_N'(0)) \overset{d}{=} \mathcal{N}_{\mathbb{C}}(0, \Ov).$$
\end{proposition}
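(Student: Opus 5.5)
Simple eigenvalues are analytic functions of the matrix entries (implicit function theorem), so first-order perturbation theory gives the derivative, and the Gaussian statement then reduces to a covariance computation.

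\emph{Smoothness.} Since $\la_i$ is a simple root of $P(t,z)=\det(z-M-tX)$, we have $\partial_z P(0,\la_i)\neq 0$. The implicit function theorem therefore yields analytic branches $\la_i(t)$ near $t=0$, and they stay distinct on a small neighborhood because the spectrum is simple. The same argument gives smooth right and left eigenvectors $R_i(t)$, $L_i(t)$. One can write these explicitly through the spectral projector $\Pi_i(t)=\frac{1}{2\pi i}\oint (z-M(t))^{-1}\dd z$ over a small circle around $\la_i$, with the normalization $\langle L_i(t)|R_i(t)\rangle=1$.

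\emph{First-order formula.} Differentiate $M(t)R_i(t)=\la_i(t)R_i(t)$ at $t=0$:
\[
XR_i+MR_i'=\la_i'R_i+\la_iR_i'.
\]
Pair this with $\langle L_i|$. Since $\langle L_i|M=\la_i\langle L_i|$, the terms involving $R_i'$ cancel, and using $\langle L_i|R_i\rangle=1$ we get
\[
\la_i'(0)=\langle L_i|X|R_i\rangle=\sum_{a,b}\overline{(L_i)_a}\,X_{ab}\,(R_i)_b .
\]

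\emph{Gaussian law.} The vector $(\la_i'(0))_i$ is a $\C$-linear image of the circularly symmetric complex Gaussian vector $(X_{ab})$, with $\E X_{ab}\overline{X_{cd}}=\delta_{ac}\delta_{bd}$ and $\E X_{ab}X_{cd}=0$. It is therefore a centered circular complex Gaussian vector, so its pseudo-covariance vanishes. Its covariance is
\[
\E\,\la_i'\overline{\la_j'}=\sum_{a,b}\overline{(L_i)_a}(L_j)_a(R_i)_b\overline{(R_j)_b}=\langle L_i|L_j\rangle\langle R_j|R_i\rangle=\Ov_{ij},
\]
which is exactly \eqref{def_overlaps}.

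The main obstacle is bookkeeping of conventions rather than analysis. The bra–ket conjugation conventions must match the ordering $\langle L_i|L_j\rangle\langle R_j|R_i\rangle$ in \eqref{def_overlaps}; with the opposite convention one obtains $\Ov_{ji}=\overline{\Ov_{ij}}$, which is the same up to the transpose convention for $\mathcal N_\C(0,\Ov)$. One should also note that this formula is independent of the choice of biorthogonal normalization. Rescaling $R_i\to cR_i$ forces $L_i\to \bar c^{-1}L_i$, and each factor $\Ov_{ij}$ is invariant under these rescalings.
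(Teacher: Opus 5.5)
Your proof is correct and is essentially the argument the paper has in mind. The paper gives no proof beyond a footnote saying the fact follows by direct computation, and that computation is exactly yours: the first-order formula $\lambda_i'(0)=\langle L_i|X|R_i\rangle$, followed by the covariance calculation $\mathbb{E}\,\lambda_i'(0)\overline{\lambda_j'(0)}=\langle L_i|L_j\rangle\langle R_j|R_i\rangle=\Ov_{ij}$ with vanishing pseudo-covariance.
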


In particular, the stability of the eigenvalue $\la_i$ considered alone is measured by its associated diagonal overlap $\Ov_{ii}$, which is nothing else than the product of the squared $L^2$-norm of its associated left and right eigenvectors:
$$
\Ov_{ii} = \| L_i \|^2 \| R_i \|^2,
$$
an intrinsic quantity that also appears in the literature as the squared \textit{eigenvalue condition number} \cites{Belinschi2017}.
The spectrum of a Ginibre matrix with the relative size of diagonal overlaps is represented on Figure \ref{fig:spectrum_Ginibre}.

This paper investigates the stability -- and thus, the distribution of the diagonal overlap -- of an eigenvalue that lies outside the unit disk: such eigenvalues are usually called outliers. We consider two paradigmatic cases and compare them. The first case is that of the complex Ginibre ensemble conditioned on $\{ \la_1 = z \}$ with $|z|>1$ (i.e. conditioned on a rather untypical event). The second case is that of a rank-one positive-definite perturbation of Ginibre that is taylored to induce an outlier around $z=t$, with $t>1$. \\

\subsection{Organization}
The plan of the paper is as follows:
in section \ref{sec_conditional}, we work with a complex Ginibre matrix, under the condition that $\{\la_1=z\}$ is an outlier. This is the setting of the recent bachelor thesis \cite{Thomas2026}, in which the complex elliptic case is also covered. More precisely, we consider complex Ginibre eigenvalues $\{ \la_1, \dots, \la_N \}$ under the condition that $\{ \la_1 = z \}$, where $z$ is a fixed point in the complex plane with $|z|>1$. We prove the almost sure convergence of the diagonal overlap towards an order one constant:

\begin{theorem}[Conditioned case]\label{thm_conditional}
    Let $G$ be a complex Ginibre matrix and $\Ov$ its matrix of overlaps between eigenvectors. Then, conditionally on $\{\la_1 = z\}$ with $|z|>1$, 
    $$
    \mathscr{O}_{11} \xrightarrow[N \rightarrow \infty]{a.s.} \frac{|z|^2}{|z|^2-1}.
    $$
\end{theorem}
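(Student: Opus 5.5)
We rely on the classical Schur-decomposition description of the diagonal overlap conditioned on an eigenvalue (Chalker--Mehlig, Bourgade--Dubach). Conditionally on $\{\la_1 = z\}$, write the Ginibre matrix in Schur form
$$
G = U \begin{pmatrix} z & w^* \\ 0 & G_1 \end{pmatrix} U^*,
$$
where $w \in \CC^{N-1}$ has i.i.d. standard complex Gaussian entries (variance $1/N$ under the normalization with spectrum filling the unit disk). Here $G_1$ is an $(N-1)\times(N-1)$ matrix whose eigenvalue law is the Ginibre law tilted by $\prod_{j\ge 2}|z-\la_j|^2$, and $w$ is independent of $G_1$. In these coordinates the right eigenvector is $R_1=e_1$, and the left eigenvector is $L_1 = (1, b)$ with $b^* = w^*(z-G_1)^{-1}$. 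Therefore
$$
\Ov_{11} = 1 + \| (z-G_1)^{-*} w \|^2 = 1 + w^* (z-G_1)^{-1}(z-G_1)^{-*} w .
$$

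The first step is to condition on $G_1$. Given $G_1$, the quantity $\Ov_{11}-1$ is a Gaussian quadratic form with mean $\frac1N \mathrm{Tr}\,[(z-G_1)^{-1}(z-G_1)^{-*}]$. We bound its fluctuations by a Hanson--Wright or fourth-moment estimate: the variance is of order $\frac{1}{N^2}\mathrm{Tr}\,[A^2] \le \frac1N\|A\|_{op}\cdot\frac1N\mathrm{Tr}\,A$, where $A=(z-G_1)^{-1}(z-G_1)^{-*}$. Provided $\|A\|_{op}$ stays bounded, Borel--Cantelli with fourth moments (which are $O(N^{-2})$) gives almost sure concentration around the normalized trace.

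The second step is the deterministic limit. Writing $\frac1N\mathrm{Tr}\,A = \frac1N\sum \sigma_i(z-G_1)^{-2}$, we need the empirical singular value distribution of $z-G_1$. Since $|z|>1$ and $G_1$ is a Ginibre matrix perturbed only by a tilt, its Hermitization limit matches that of Ginibre. For Ginibre outside the disk, $\frac1N\mathrm{Tr}\,(z-G)^{-1}(z-G)^{-*}\to \frac{1}{|z|^2-1}$. This can be checked by expanding
$$
(z-G)^{-1}=\sum_k z^{-k-1}G^k
$$
and using the free circular relation $\tau(G^k G^{*l})=\delta_{kl}$, which gives $\sum_k |z|^{-2k-2} = \frac{1}{|z|^2-1}$. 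Hence $\Ov_{11}\to 1+\frac{1}{|z|^2-1}=\frac{|z|^2}{|z|^2-1}$.

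The main obstacle is controlling $G_1$ under the tilt. We need two things: that the smallest singular value of $z-G_1$ stays bounded below with overwhelming probability, so the norm bound holds and the Neumann expansion is justified (for instance via $\|G_1\|\le 1+\varepsilon$), and that the tilt does not change the limiting trace. The factor $\prod|z-\la_j|^2$ is a linear-statistic tilt, $\exp(2\sum\log|z-\la_j|)$, whose fluctuations are $O(1)$ because $\log|z-\cdot|$ is harmonic on the disk. A large-deviation or determinantal-ensemble argument, such as bounding the Radon--Nikodym derivative in $L^2$, should therefore transfer Ginibre's exponentially small failure probabilities to $G_1$. The matrix $G_1$ itself is not Ginibre, but the same Schur recursion (Bourgade--Dubach) represents it as a triangular matrix with independent Gaussian off-diagonal entries. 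The operator-norm bound for that triangular model is the step I expect to require the most care.
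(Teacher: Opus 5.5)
Your reduction $\Ov_{11}=1+w^*(z-G_1)^{-1}(z-G_1)^{-*}w$ is correct, and so is the conditional concentration of this quadratic form around $\frac1N\mathrm{Tr}\,A$. The gap is in what comes next. Everything hinges on two facts about the non-normal block $G_1$ under the conditioned (tilted) law: that $\|(z-G_1)^{-1}\|$ is almost surely bounded for large $N$, and that $\frac1N\mathrm{Tr}\,(z-G_1)^{-1}(z-G_1)^{-*}\to\frac{1}{|z|^2-1}$. Neither is proved.

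Both facts concern the singular values of $z-G_1$. These are not controlled by where the eigenvalues lie, which is all that Lemma~\ref{lemma:spectral_radius} and Lemma~\ref{lemma:circular_law} give you. This is exactly the obstruction the paper names in its conclusion when explaining why resolvent bounds are not used in the conditioned case.

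The justification you suggest is also false. $\|G_1\|\le 1+\varepsilon$ does not hold: the operator norm of a Ginibre matrix with entries of variance $1/N$ tends to $2$, and only the spectral radius tends to $1$. So for $1<|z|\le 2$ it gives no lower bound on $s_{\min}(z-G_1)$ and does not justify a finite-$N$ Neumann expansion.

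The Radon--Nikodym transfer might be made to work, but it would need three things you do not supply:
\begin{itemize}
\item a bound on $\E|\det(z-G_1)|^4/(\E|\det(z-G_1)|^2)^2$;
\item Ginibre estimates for $s_{\min}(z-G)$ and for the normalized trace whose failure probabilities remain summable after taking square roots;
\item an almost sure singular-value limit.
\end{itemize}
As written, the key step is a plan rather than a proof.

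The missing idea is that you never need the off-diagonal part of $G_1$. You note that $G_1$ is unitarily conjugate to an upper-triangular $T_1$ with diagonal $\la_2,\dots,\la_N$ and independent Gaussian entries above the diagonal. However, you use this only as a route to a norm bound. Instead, run the Schur recursion on the quadratic form itself, adding one row and column of $T_1$ at a time. Each new coordinate of $(z-T_1)^{-*}w$ is $(\eta+\tau^*y)/\overline{(z-\la)}$, where:
\begin{itemize}
\item $\la$ is the new diagonal entry;
\item $y$ is the previous partial vector;
\item $\tau$ is the new column above the diagonal;
\item $\eta$ is the new coordinate of $w$ in the Schur basis.
\end{itemize}
Conditionally on the past, $\eta+\tau^*y\sim\mathcal{N}_{\CC}(0,(1+\|y\|^2)/N)$. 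This integrates out the strictly upper-triangular entries and gives exactly \eqref{eq_distr}: $\Ov_{11}\stackrel{d}{=}\prod_{k\ge 2}\bigl(1+|X_k|^2/(N|z-\la_k|^2)\bigr)$, with $X_k$ i.i.d.\ and independent of the eigenvalues. It is the same recursion as in the lemma on $Q_N(z)$ in Section~\ref{sec_perturbed}.

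The paper's proof starts from this identity, so only eigenvalue information is needed:
\begin{itemize}
\item Lemmas~\ref{lemma:Gaussian_tail} and~\ref{lemma:spectral_radius} give the separation $|z-\la_k|>\delta$ and let you linearize $\ln\Ov_{11}$ into $\frac1N\sum_k|X_k|^2|z-\la_k|^{-2}$.
\item A weighted law of large numbers removes the $X_k$, since the weights are bounded and independent of them.
\item Lemma~\ref{lemma:circular_law} then gives the limit $\ln\frac{|z|^2}{|z|^2-1}$.
\end{itemize}
Your free-probability value $1+\frac{1}{|z|^2-1}$ coincides with $\exp\bigl(\frac1\pi\int_{\DD}|z-u|^{-2}\,\dd^2u\bigr)$, so your heuristic is right. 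What fails is the control of the tilted non-normal block.
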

We also verify by a direct computation that the conditional expectation $\E \left( \Ov_{11} | \la_1 = z \right)$ converges to the same limit value, whereas the conditional variance of $\Ov_{11}$ is infinite. Thus, diagonal overlaps of conditioned outliers are random variables of order $1$, but still heavy-tailed at finite $N$. \\

In section \ref{sec_perturbed}, we consider a different setting, namely a rank-one Hermitian perturbation\footnote{This model was originally considered for its applications to neural networks: indeed, it can be used as a toy model of a synaptic matrix \cite{RajanAbbott}.} of the matrix $G$, considering
\begin{equation}\label{eq:rank_one_pert}
G(t) = G + t vv^*, \quad t>1
\end{equation}
with $v$ a unit vector independent on $G$.
It is known \cites{Chafai,Tao2013} that for any $t>1$, with high probability, the spectrum of $G(t)$ has one outlier in a neighborhood of $z = t$; we may assume that this eigenvalue is $\la_1$, so that
$$
\la_1 (t) \simeq t  + O_{\PP}(N^{-\frac12}).
$$
In this context, we obtain another result of almost sure convergence to an order $1$ constant:

\begin{theorem}[Perturbed case]\label{thm_perturbed}
    Let $t>1$ and $\lambda_1(t)$ the outlier eigenvalue of $G(t)$ defined as in eq. \eqref{eq:rank_one_pert}. Then
    $$
    \mathscr{O}_{11} \xrightarrow[N \rightarrow \infty]{a.s.} \left( \frac{t^2}{t^2-1} \right)^2.
    $$
\end{theorem}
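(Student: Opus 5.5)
We write the outlier eigenvectors explicitly through the resolvent of $G$. Let $\la=\la_1(t)$. For $|\la|>1$, the matrix $\la - G$ is invertible with high probability, because the spectral radius of $G$ tends to $1$ almost sure. The eigenvalue equation $(G+tvv^*)R=\la R$ gives $(\la-G)R = t v (v^*R)$, so up to normalization
$$
R = (\la - G)^{-1} v, \qquad L = (\bar\la - G^*)^{-1} v,
$$
and $\la$ solves the secular equation $t\, v^*(\la-G)^{-1}v = 1$. Since $\Ov_{11}$ does not depend on normalization,
$$
\Ov_{11} = \frac{\|(\la-G)^{-1}v\|^2\,\|(\bar\la-G^*)^{-1}v\|^2}{|v^*(\la-G)^{-2}v|^2}.
$$
Here the denominator is $|\langle L|R\rangle|^2$ written with these unnormalized vectors.

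The proof then reduces to three quadratic-form limits, taken uniformly for $z$ in a small compact neighborhood $K$ of $t$ lying outside the closed unit disk:
$$
v^*(z-G)^{-1}v \to \frac1z,\qquad v^*(z-G)^{-2}v \to \frac1{z^2},\qquad \|(z-G)^{-1}v\|^2 \to \frac{1}{|z|^2-1},
$$
and the same for $G^*$. The first two limits come from the Neumann series $(z-G)^{-1}=\sum_k G^k/z^{k+1}$. The $v$-quadratic forms of $G^k$ concentrate: $v^*G^kv\to\delta_{k0}$ almost surely, by independence of $v$ and $G$ together with moment bounds.

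The third limit uses $v^*G^{*j}G^kv \to \delta_{jk}$. This holds because $\frac1N\operatorname{tr} G^{*j}G^k\to\delta_{jk}$ for Ginibre (Wick calculus, or the $*$-moments of a Haar unitary/circular element). Summing gives
$$
\sum_k |z|^{-2(k+1)} = \frac{1}{|z|^2-1}.
$$

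To upgrade pointwise convergence to uniform convergence on $K$, and to control the tails of the Neumann series, we use the fact that $\|G^k\|\le C_k$ grows at most polynomially in $k$ almost surely (or the known bound on the spectral radius), together with holomorphy and Vitali/Montel for the analytic quantities. Alternatively, these limits follow directly from the isotropic local law for $(z-G)^{-1}$ outside the spectrum, which holds with overwhelming probability and hence almost surely by Borel--Cantelli.

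With these limits in hand, the secular equation and Hurwitz's theorem give $\la_1(t)\to t$ almost surely, since $t/z=1$ has the unique root $z=t$. Plugging $z=\la\to t$ into the formula for $\Ov_{11}$ gives
$$
\Ov_{11}\to \frac{(t^2-1)^{-2}}{t^{-4}} = \left(\frac{t^2}{t^2-1}\right)^2.
$$

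The main obstacle is the almost sure uniform convergence of the non-holomorphic quantity $\|(z-G)^{-1}v\|^2$ near $t$. This requires concentration of $v^*(\bar z-G^*)^{-1}(z-G)^{-1}v$ with summable error probabilities. I would first try the isotropic local law for the Hermitized resolvent, which gives overwhelming probability bounds on a fine grid. I would then combine it with Lipschitz continuity in $z$, which follows from $\|(z-G)^{-1}\|\le C$ on $K$ almost surely, to pass from the grid to all of $K$.
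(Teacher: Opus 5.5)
Your argument is correct in substance, but it takes a different route from the paper.

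\textbf{How the paper argues.} The paper takes a Schur complement with respect to the minor $D_N$ and writes the eigenvectors as $(1,\mathbf{R}_N(\lambda_1)c_N)$ and $(1,b_N^*\mathbf{R}_N(\lambda_1))$. It shows that each numerator factor $1+b_N^*\mathbf{R}_N\mathbf{R}_N^*b_N$ tends to $q(t)=t^2/(t^2-1)$, using an exact Schur-form identity $\prod_k\bigl(1+|X_k|^2/(N|z-\mu_k|^2)\bigr)$ together with the circular-law integral $\frac1\pi\int_{\DD}|z-u|^{-2}\,\mathrm{d}^2u=\log q(z)$. The pairing $1+b_N^*\mathbf{R}_N^2c_N$ tends to $1$ by conditioning on $(D_N,b_N)$, and $\lambda_1\to t$ is imported from Tao.

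\textbf{How your route differs.} You use the full resolvent of $G$ and the secular equation. Your $R$ is $t^{-1}$ times the paper's right eigenvector, so your norms tend to $1/(t^2-1)$ and your pairing to $t^{-2}$ instead of $q(t)$ and $1$; the ratio is the same. Your key input $v^*G^{*j}G^kv\to\delta_{jk}$ is right, since $\tau(c^{*j}c^k)=\delta_{jk}$ for a circular element. Passing from $\frac1N\mathrm{tr}$ to $v^*(\cdot)v$ is routine: replace $v$ by a Haar vector via unitary invariance, then use a fourth-moment bound and Borel--Cantelli.

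\textbf{What each buys.} The paper's route is tied to the Ginibre Schur structure, but it parallels the conditioned case and explains the square in the limit as one factor $q(t)$ per eigenvector. Yours needs only mixed moments and a resolvent bound. It would therefore extend to non-Gaussian i.i.d.\ entries and finite-rank perturbations, and it yields $\lambda_1\to t$ from the same estimates via Hurwitz.

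\textbf{The step that fails as written: Neumann tails.} No bound $\|G^k\|\le C_k$ with $C_k$ polynomial in $k$ can hold for all $k$ uniformly in large $N$. Whenever $\rho(G)>1$, which happens with probability tending to one, $\|G^k\|\ge\rho(G)^k$ grows exponentially in $k$. The spectral radius alone also gives no $N$-uniform control of $\|G^k\|$ for a non-normal matrix.

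\textbf{The repair.} It uses the ingredient you already invoke for Lipschitz continuity. Fix $1<r<\min_K|z|$. The bound $\sup_{|w|=r}\|(w-G)^{-1}\|\le C_r$ holds almost surely for large $N$. Combined with Cauchy's formula $G^k=\frac{1}{2\pi i}\oint_{|w|=r}w^k(w-G)^{-1}\,\mathrm{d}w$, it gives $\|G^k\|\le C_r r^{k+1}$ for every $k$. The tails are then uniformly geometric, and dominated convergence applies to both the single and double series.

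\textbf{An unproved input.} This resolvent bound is a genuine ingredient that you assert without proof. The paper proves it for $D_N$, and the argument is identical for $G$. It uses the Hermitization $(w-G)(w-G)^*$, whose limiting spectrum stays away from $0$ when $|w|>1$, a no-outlier theorem for that matrix, and a net.

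\textbf{The main obstacle needs no local law.} Almost sure convergence on a countable dense subset of $K$, plus the equi-Lipschitz bound coming from the resolvent estimate, already gives uniform convergence on $K$; this is exactly the paper's net argument. For the holomorphic quantity you can simply use $v^*(z-G)^{-2}v=-\partial_z\, v^*(z-G)^{-1}v$.
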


It is an interesting fact that the constant obtained in the perturbed case (Theorem \ref{thm_perturbed}) corresponds to the square of the one obtained in the conditional case (Theorem \ref{thm_conditional}). Apart from this slight difference, the phenomenon is the same in both cases, namely: almost sure convergence of the diagonal overlap towards an order one object, as represented on Figure \ref{fig:overlap_distr}.

\begin{figure}[h!]
    \centering
    \includegraphics[width=0.75\linewidth]{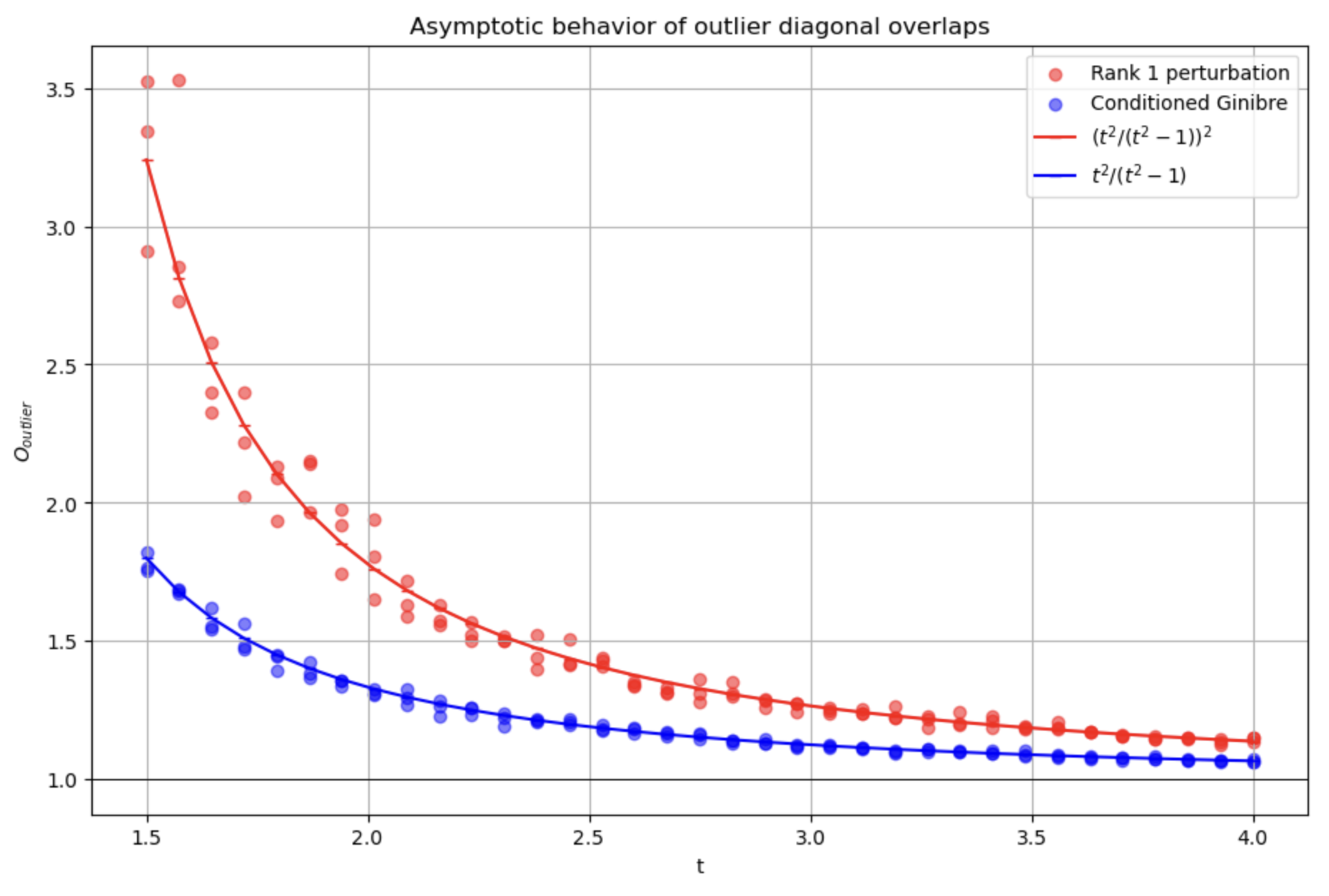}
    \caption{Comparison of the values of diagonal overlaps in the conditioned and perturbed case, for matrices of size $N=400$, over $210$ experiments for values of $t$ ranging from $1.5$ to $4$. The fact that the fluctuations are more important in the perturbed case is likely due to the fluctuations of $\lambda_1$ around its expected value $z=t$.}
    \label{fig:overlap_distr}
\end{figure}

Convergence of diagonal overlaps towards an order $1$ quantity indicates that outlier eigenvalues are much more stable than their bulk (or even edge) counterparts. Indeed, the above results should be contrasted with the convergence in distribution obtained in \cite{BourgadeDubach} when conditioning on $\{ \la_i = z\}$ with $|z| < 1$:
$$
\frac{1}{N} \Ov_{ii} \xrightarrow[N \rightarrow \infty]{d} \frac{1-|z|^2}{\gamma_2}
$$
which means that in the bulk regime, diagonal overlaps are random objects of order $N$ with important fluctuations (the inverse of a Gamma distribution with parameter $2$ is a heavy-tailed distribution, with expectation 1 and infinite variance). At the edge, the expected size of the diagonal overlap is of order $\sqrt{N}$, as shown in \cites{Akemann_determinantal,Thomas2026,Walters, Zhang}. Some heuristic comments on these phenomena and the scope of the available proof techniques are shared in conclusion (Section \ref{sec_conclusion}). 
Finally, the asymptotic expansion of the incomplete gamma function that is needed in section~\ref{sec_conditional} is provided in the appendix.

\subsection{Notations and assumptions.} Throughout the paper, $\lvert \lvert . \lvert \lvert$ denotes either the Euclidean norm if it is applied to a vector or the operator norm induced by the Euclidean norm if it is applied to a matrix, and $\dd^2 z $ stands for integration with respect to the Lebesgue measure on $\CC$. We will denote the partial sums of the exponential by
$$
e^{(m)} (x) = \sum_{k=0}^m \frac{x^k}{k!}.
$$
The joint distribution of rescaled Ginibre eigenvalues is denoted by $\mu_N$, and the measure of the same process conditioned on $z_1 = z$ will be denoted by $\mu_{N-1}^{(z)}$, so that we have
\begin{equation}\label{joint_density_1}
    \dd \mu_N(z_1, \dots, z_N) = \frac{N^{\frac{N(N+1)}{2}}}{ \pi^N \prod_{k=1}^N k!} \prod_{1 \leq i,j \leq N} |z_i - z_j|^2 e^{-\sum_{i=1}^N N |z_i|^2} \dd^2 z_1 \cdots  \dd^2 z_N
\end{equation}
and for the conditioned process:
\begin{equation}\label{joint_density_2}
    \dd \mu_{N-1}^{(z)}(z_2, \dots, z_N) = \frac{1}{\rho_1(z)} \prod_{k=2}^N |z-z_k|^2  e^{- N |z|^2} \prod_{2 \leq i,j \leq N} |\la_i - \la_j|^2 e^{-\sum_{i=2}^N N |z_i|^2} \dd^2 z_1 \cdots  \dd^2 z_N
\end{equation}
where $\rho_1(z)$ is the first correlation function of the complex Ginibre determinantal point process, given by
$$
\rho_1(z) = e^{(N)} (N|z|^2).
$$

In both parts, we will rely on the fact that the outlier is almost surely well separated from the rest of the spectrum. Namely, in both cases, with probability $1$, the rest of the spectrum is in the disk $D(0,1+\epsilon)$ for fixed epsilon, and the outlier eigenvalue is assumed to have radius $|\la_1|> 1+ \epsilon + \delta$, so that
\beq\label{delta_distance}
\forall k, \quad |\la_1 - \la_k| > \delta.
\eeq
For simplicity, we keep this $\delta$ constant in this work; however, most of our results can be improved with minimal effort and proved to hold for smaller outliers, i.e. outliers closer to the unit disk, with $\delta_N \rightarrow 0$ in a reasonable range as $N \rightarrow \infty$.

We also rely on the rest of the spectrum converging weakly to the circular law; in the conditioned case, this is Lemma \ref{lemma:circular_law}, proved in \cite{Thomas2026} from the explicit form of the correlation functions of $\mu_{N-1}^{(z)}$. In the perturbed case, it is a theorem originally proved in \cite{Chafai}.

Some uniform bounds of Section \ref{sec_perturbed} rely on finite nets arguments; a finite $\eta$-net for a given compact $K$ is a finite set $S$ of points such that every $x\in K$ is at distance at most $\eta$ from $S$, i.e.
$$
\forall z \in K \quad \exists x \in S, \quad |x-z| < \eta.
$$

\newpage
\section{Stability of a conditioned outlier}\label{sec_conditional}

In this section, we consider the Complex Ginibre Ensemble \textit{conditioned} on having an outlier -- that is to say that we simply assume that the outlier happens to be there, despite the relative rareness of such an event. Without loss of generality, we will assume that the outlier is the eigenvalue $\la_1$, and study its diagonal overlap $\Ov_{11}$, for which we prove convergence almost surely (Theorem \ref{thm_cond_as_conv}), and in expectation (Proposition \ref{prop_cond_exp_conv}), towards the order one constant $\frac{|z|^2}{|z|^2-1}$. \\

\begin{figure}[h!]
    \centering
    \includegraphics[width=0.9\linewidth]{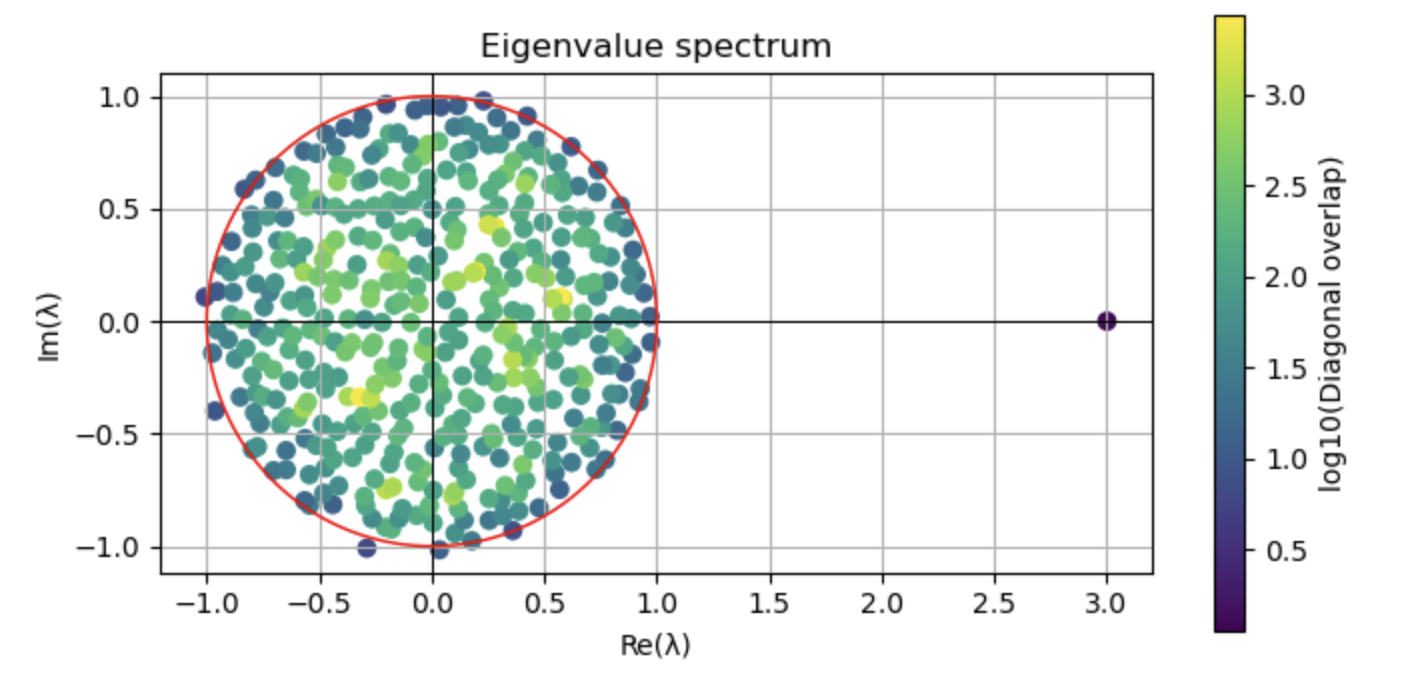}
    \caption{The spectrum of a Ginibre matrix conditioned on $\la_1 = 3$, with the relative size of diagonal overlaps indicated on a color scale.}
    \label{fig:spectrum_cond}
\end{figure}

We will rely on the following lemmata, which can all be proved by classical techniques.

\begin{lemma}[Gaussian tail estimate]\label{lemma:Gaussian_tail}
Let $(X_k^{(N)})_{1 \leq k \leq N}$ be an array of i.i.d. complex standard Gaussian variables. Then, for every $\gamma > 2$, almost surely, there exists an integer $N_0$ such that
$$
\forall N \geq N_0, \ \forall k \in [\![1,N]\!], \ \left| X_k^{(N)} \right|^2 < \gamma \log N.
$$
\end{lemma}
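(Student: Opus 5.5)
The plan is a union bound combined with Borel--Cantelli. The key fact is that for a standard complex Gaussian $X$ (density $\frac{1}{\pi}e^{-|x|^2}$ on $\CC$), the modulus squared $|X|^2$ is exactly exponential with parameter $1$:
$$
\PP\left(|X|^2 \geq s\right) = e^{-s}, \qquad s\geq 0.
$$
This follows from a one-line polar-coordinates computation. If a different normalization of ``standard'' were intended, only the constant in the exponent would change, and the threshold on $\gamma$ would shift accordingly.

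Fix $\gamma>2$ and define the events
$$
A_N := \left\{ \exists k\in[\![1,N]\!],\ \left|X_k^{(N)}\right|^2 \geq \gamma\log N \right\}.
$$
A union bound over the $N$ entries of the $N$-th row gives
$$
\PP(A_N) \leq N\, e^{-\gamma \log N} = N^{1-\gamma}.
$$
Since $\gamma>2$, the exponent satisfies $1-\gamma<-1$, so $\sum_N \PP(A_N)<\infty$.

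The first Borel--Cantelli lemma then gives $\PP(\limsup_N A_N)=0$. Note that this direction of the lemma needs no independence between different rows $N$, so the coupling of the array across $N$ does not matter. Hence, almost surely, only finitely many $A_N$ occur. Taking $N_0$ to be one more than the last index $N$ for which $A_N$ occurs (or $N_0 = 1$ if none occurs), we obtain the strict inequality $|X_k^{(N)}|^2<\gamma\log N$ for all $N\geq N_0$ and all $k\leq N$, which is the statement.

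There is no real obstacle here. The only point requiring care is the exact tail formula for $|X|^2$. It is also worth remarking why the threshold is $\gamma>2$: the union bound produces $N^{1-\gamma}$, and summability requires $\gamma>2$. If the rows were independent, the threshold could be improved to $\gamma>1$ through the second Borel--Cantelli lemma, but this is not needed.
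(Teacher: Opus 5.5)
Your union bound with the exact tail $\PP(|X|^2\ge s)=e^{-s}$, followed by the first Borel--Cantelli lemma, is correct. It is the classical argument the paper has in mind (the paper gives no written proof, only ``classical techniques''), and your normalization $\E|X|^2=1$ matches the paper's use of $\E_X(|X_k|^2)=1$. Your closing aside is backwards, though: for independent rows, $\PP(A_N)=1-(1-N^{-\gamma})^N$ is not summable when $\gamma\le 2$, so the second Borel--Cantelli lemma shows that $\gamma>2$ is \emph{sharp}; the improvement to $\gamma>1$ holds instead for a nested array $X_k^{(N)}=X_k$, by applying the first Borel--Cantelli lemma to the events $\{|X_N|^2\ge\gamma\log N\}$.
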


\begin{lemma}[Second largest eigenvalue \cite{Thomas2026}]\label{lemma:spectral_radius}
Let $\{ \la_k^{(N)} \}_{1 \leq k \leq N}$ be an array of random complex numbers such that for every $N$, $\{ \lambda_1^{(N)}, \dots, \lambda_N^{(N)} \}$ is distributed like the spectrum of a complex Ginibre matrix conditioned on $\{ \lambda_1 = z \}$ with fixed $z \in \CC$; then almost surely, for any fixed $\epsilon > 1$ there exists an integer $N_0 >0$ such that
$$
\forall N \geq N_0, \  \forall k \in [\![ 2,N ]\!] \ |\la_k| < 1+\epsilon.
$$
\end{lemma}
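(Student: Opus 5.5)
The plan is to use the determinantal structure of the conditioned process $\mu_{N-1}^{(z)}$, bound the expected number of its points outside $D(0,1+\epsilon)$ by an exponentially small quantity, and conclude with Borel--Cantelli. (The hypothesis ``$\epsilon>1$'' is presumably a typo for $\epsilon>0$; the argument below works for every fixed $\epsilon>0$.)

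\emph{Step 1: the conditioned one-point function.} Conditioning a determinantal point process on having a point at $z$ gives its Palm measure. For the complex Ginibre process, with kernel
$$K_N(u,w)=\frac{N}{\pi}e^{-\frac N2(|u|^2+|w|^2)}e^{(N-1)}(Nu\bar w),$$
the Palm measure is again determinantal, with kernel $K_N(u,w)-K_N(u,z)K_N(z,w)/K_N(z,z)$. This can also be read off directly from \eqref{joint_density_2}. Consequently the one-point function of $\mu_{N-1}^{(z)}$ satisfies
$$\rho_1^{(z)}(w)=K_N(w,w)-\frac{|K_N(w,z)|^2}{K_N(z,z)}\le K_N(w,w)=\frac N\pi e^{-N|w|^2}e^{(N-1)}(N|w|^2).$$

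\emph{Step 2: first-moment bound.} By the union bound,
$$\PP\bigl(\exists k\ge 2,\ |\la_k|\ge 1+\epsilon\bigr)\le \E\,\#\{k:|\la_k|\ge 1+\epsilon\}=\int_{|w|\ge 1+\epsilon}\rho_1^{(z)}(w)\,\dd^2 w\le \int_{|w|\ge1+\epsilon}K_N(w,w)\,\dd^2w .$$
It therefore suffices to bound this unconditioned integral.

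\emph{Step 3: tail estimate (the main technical step).} For $x=N|w|^2>N$, the terms of $e^{(N-1)}(x)$ increase in $k$, so
$$e^{(N-1)}(x)\le N\frac{x^{N-1}}{(N-1)!}.$$
Combining this with Stirling's formula gives
$$K_N(w,w)\le C N^{3/2}\,|w|^{-2}\exp\!\bigl(-N(|w|^2-1-\log|w|^2)\bigr).$$
Write $r=|w|$ and $f(r)=r^2-1-\log r^2$. This function is increasing for $r>1$, satisfies $f(1+\epsilon)=:c_\epsilon>0$, and grows like $r^2$. Integrating in polar coordinates, the tail at large $r$ is controlled by $\int r^{-1}e^{-N f(r)}\,dr$, which is dominated by the contribution near $r=1+\epsilon$. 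The result is
$$\int_{|w|\ge1+\epsilon}K_N(w,w)\,\dd^2w\le C_\epsilon N^{3/2}e^{-Nc_\epsilon}.$$
The only care needed is to make the Stirling bound uniform in $|w|$, which the monotonicity of the terms provides. Alternatively, one can bound the probability that a sum of $N$ i.i.d. Exp(1) variables is at least $N|w|^2$ by a Chernoff estimate.

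\emph{Step 4: Borel--Cantelli.} The bound $C_\epsilon N^{3/2}e^{-Nc_\epsilon}$ is summable in $N$. Borel--Cantelli then gives, for each fixed $\epsilon$, that almost surely there exists $N_0$ such that for all $N\ge N_0$ and all $k\ge2$, $|\la_k|<1+\epsilon$. This holds regardless of how the arrays at different $N$ are coupled. Applying it along the countable family $\epsilon=1/m$ makes the statement hold simultaneously for all $\epsilon>0$ on a single almost-sure event.
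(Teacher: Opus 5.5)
Your proof is correct, but it follows a different route from the one the paper points to.

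The paper gives no proof of this lemma. It defers to \cite{Thomas2026}, where the bound comes from a variation of Kostlan's argument. Kostlan's argument rests on an exact identity in law between the squared moduli $N|\la_k|^2$ and a family of independent Gamma variables, followed by Gamma tail bounds. That identity has to be adapted here, because conditioning at $z\neq 0$ destroys rotation invariance.

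You never need such an identity. You use only three things:
\begin{itemize}
\item the first correlation function of the Palm process;
\item the pointwise domination $\rho_1^{(z)}(w)\le K_N(w,w)$, which holds for every $z$ and any Hermitian kernel;
\item a first-moment Borel--Cantelli argument.
\end{itemize}

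The two routes meet at the end. Since
\[
\int_{|w|\ge R}K_N(w,w)\,\dd^2 w=\sum_{k=1}^{N}\PP\bigl(\gamma_k\ge NR^2\bigr),\qquad \gamma_k\sim\Gamma(k,1),
\]
your Step 3 is exactly Kostlan's union bound for the unconditioned ensemble. This is also why your Chernoff alternative works.

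Your approach is more robust. It needs neither radial symmetry nor exact distributional information on the moduli. It therefore transfers verbatim to any determinantal ensemble with a Hermitian kernel, conditioned at any point, for instance the spherical or truncated unitary ensembles mentioned in the paper.

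A Kostlan-type argument gives more information, namely a distributional comparison of the conditioned moduli with the unconditioned ones rather than just a first-moment bound. The price is that it depends on the rotation-invariant weight.

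Your reading of ``$\epsilon>1$'' as a typo for $\epsilon>0$ is right: the main theorem needs $\epsilon$ small. Your countable-family argument over $\epsilon=1/m$ correctly secures the ``almost surely, for all $\epsilon$'' quantifier order.
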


\begin{lemma}[Conditional Circular Law \cite{Thomas2026}]\label{lemma:circular_law}
Let $\{ \lambda_1, \dots, \lambda_N \}$ the spectrum of a complex Ginibre matrix conditioned on $\{ \lambda_1 = z \}$ with fixed $z \in \CC$; almost surely, the empirical spectral distribution
$$
\frac{1}{N-1} \sum_{k=2}^N \delta_{\la_k}
$$
converges weakly to the circular law. That is to say that, for any continuous bounded function~$\phi$, we have
$$
\frac{1}{N-1} \sum_{k=2}^N \phi(\lambda_k) 
\xrightarrow[]{N \rightarrow \infty} 
\frac{1}{\pi} \int_{\DD} \phi(z) \dd z.
$$
\end{lemma}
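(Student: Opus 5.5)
The plan is to exploit the determinantal structure of the conditioned process $\mu_{N-1}^{(z)}$. For a determinantal point process with a Hermitian projection kernel, the Palm measure (conditioning on a point at $z$) is again determinantal (Shirai--Takahashi). Its kernel is
$$
K_N^{(z)}(u,w) = K_N(u,w) - \frac{K_N(u,z)K_N(z,w)}{K_N(z,z)},
$$
where $K_N$ is the rescaled Ginibre kernel, with diagonal $\rho_1$. One can also see this directly from \eqref{joint_density_2}: the conditioned process is an orthogonal polynomial ensemble of $N-1$ points for the weight $|z-u|^2e^{-N|u|^2}$. In either description, $K_N^{(z)}$ is a Hermitian projection of rank $N-1$ in $L^2(\CC)$.

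\textbf{Step 1 (first moment).} For a bounded Borel set $A$, I would compute the expected count:
$$
\E\big[\#\{k\ge2:\la_k\in A\}\big]=\int_A K_N(u,u)\,\dd^2u-\frac{1}{K_N(z,z)}\int_A |K_N(u,z)|^2\,\dd^2u .
$$
The reproducing property gives $\int_{\CC}|K_N(u,z)|^2\,\dd^2u=K_N(z,z)$, so the correction term lies in $[0,1]$. After dividing by $N-1$, the mean is therefore that of the unconditioned Ginibre ensemble up to $O(1/N)$. The classical convergence $\frac1N K_N(u,u)\to\frac1\pi\mathds 1_{\DD}(u)$, locally uniformly away from the unit circle and boundedly everywhere, yields
$$
\frac{1}{N-1}\,\E\,\#\{\la_k\in A\}\;\longrightarrow\;\frac{|A\cap\DD|}{\pi}.
$$

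\textbf{Step 2 (concentration and Borel--Cantelli).} Since $K_N^{(z)}$ is a Hermitian projection kernel, Hough--Krishnapur--Peres--Vir\'ag shows that the count $\#\{\la_k\in A\}$ has the law of a sum of independent Bernoulli variables. The Chernoff--Hoeffding inequality then gives
$$
\PP\Big(\big|\#\{\la_k\in A\}-\E\,\#\{\la_k\in A\}\big|>\varepsilon N\Big)\le 2e^{-2\varepsilon^2 N}.
$$
This bound is summable in $N$, so by Borel--Cantelli the normalized count in $A$ converges almost surely to $|A\cap\DD|/\pi$. I would apply this simultaneously to the countable family of squares with rational corners, which is possible because a countable intersection of almost sure events is almost sure. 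No coupling between different values of $N$ is needed.

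\textbf{Step 3 (from squares to continuous test functions).} Lemma \ref{lemma:spectral_radius} (or Step 2 applied to a large square) gives tightness: almost surely, all but a vanishing fraction of the points lie in a fixed compact set. A bounded continuous $\phi$ is uniformly continuous on that compact set, so it can be approximated uniformly there by step functions on rational squares. Combined with Step 2, this yields the stated convergence of $\frac1{N-1}\sum_{k\ge2}\phi(\la_k)$.

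The main obstacle I expect is the structural input in Step 2: justifying that the conditioned law \eqref{joint_density_2} is determinantal with a projection kernel of rank $N-1$. This is what makes the independent-Bernoulli representation, and hence the exponential concentration, available. I would first verify it directly by orthogonalizing $1,u,\dots,u^{N-2}$ against the weight $|z-u|^2e^{-N|u|^2}$. If that representation were unavailable, the fallback is a variance bound of the form $\frac12\iint|\phi(u)-\phi(w)|^2|K_N^{(z)}(u,w)|^2$ for Lipschitz $\phi$, which is $O(1)$. Together with a fourth-moment cumulant estimate, this would still give a summable bound $O(N^{-2})$ for the deviation probability.
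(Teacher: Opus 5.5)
Your proposal is correct, and it takes a different route from the one the paper indicates. The paper does not prove this lemma itself. It defers to \cite{Thomas2026}, saying only that the conditioned process is determinantal with a slightly modified kernel and that the expected circular law follows from the kernel estimates of Ameur--Hedenmalm--Makarov \cite{AHM1}. That is, it relies on asymptotic analysis of the reproducing kernel for the modified weight $|z-u|^2e^{-N|u|^2}$, and the upgrade to almost sure convergence is left implicit.

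You avoid any analysis of the modified kernel. The Shirai--Takahashi formula writes $K_N^{(z)}$ as a rank-one correction of the Ginibre kernel, whose diagonal has total mass at most $1$ by the reproducing property. So you only need the classical one-point asymptotics $\frac1N K_N(u,u)\to\frac1\pi\mathbf{1}_{\DD}(u)$. You then supply an explicit almost sure mechanism: the Bernoulli representation of counts, Hoeffding's inequality, and Borel--Cantelli over a countable family. This is precisely the step the paper does not spell out.

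Your argument is shorter and self-contained. It also applies verbatim to the Palm measure of any determinantal projection ensemble, such as the spherical or truncated unitary ensembles the paper mentions. The kernel-estimate route instead gives pointwise and local control of $K_N^{(z)}$. That is more than the macroscopic law requires, but it would be needed for finer local statements.

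Two small points should be made explicit. First, the bound $2e^{-2\varepsilon^2 N}$ uses that $\mathbf{1}_A K_N^{(z)}\mathbf{1}_A$ has rank at most $N-1$, so the count is a sum of at most $N-1$ independent Bernoulli variables. Second, in Step 3 you should use half-open rational rectangles, so that your step functions genuinely partition the large square.
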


Note that the conditioned Ginibre process with joint density \eqref{joint_density_2} is a determinantal process that resembles the unconditioned Ginibre process, with a slightly modified kernel. The bound on the second largest eigenvalue was readily justified in \cite{Thomas2026} with a variation of Kostlan's argument \cite{Kostlan}, and the expected circular law was obtained through the classical kernel estimates from Ameur, Hedenmalm and Makarov \cite{AHM1}. \\

With the above facts in mind, we can provide a proof of our main theorem.

\begin{theorem}[Almost-sure convergence of diagonal overlaps]\label{thm_cond_as_conv}
Let $G$ be a complex Ginibre matrix conditioned on $\{ \la_1 = z \}$ with $|z| > 1$. Then, the diagonal overlap associated to $\la_1$ converges almost surely to a constant, namely:
$$
\Ov_{11} \xrightarrow[N \rightarrow \infty]{a.s.} \frac{|z|^2}{|z|^2-1}.
$$
\end{theorem}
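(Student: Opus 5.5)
Conditionally on $\{\la_1 = z\}$, the standard Schur-form representation from Bourgade--Dubach applies. Write $G = U T U^*$ with $T$ upper triangular, $T_{11} = \la_1 = z$. The diagonal overlap is then
$$
\Ov_{11} = \prod_{k=2}^N \left( 1 + \frac{|X_k|^2}{N |z - \la_k|^2} \right),
$$
where $X_2,\dots,X_N$ are i.i.d. standard complex Gaussians independent of the eigenvalues $(\la_2,\dots,\la_N)$, and the latter are distributed according to $\mu_{N-1}^{(z)}$. This is the known identity in law: $\Ov_{11}$ equals the product of the $|b_k|^2$-recursion terms in the computation of the left eigenvector. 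The plan is to prove the almost sure convergence of $\log \Ov_{11}$ by comparing it to a Riemann-type sum. We may realize everything on one probability space, with the arrays $(X_k^{(N)})$ and $(\la_k^{(N)})$ independent, since almost sure statements about the law of each $\Ov_{11}^{(N)}$ are understood in this coupled sense.

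\textbf{Step 1: linearize the logarithm.} By Lemma \ref{lemma:spectral_radius}, applied with $\epsilon$ chosen so that $1+\epsilon < |z|$ (presumably the lemma intends $\epsilon>0$ small), almost surely for $N$ large we have $|z-\la_k| > \delta$ for all $k\ge 2$. By Lemma \ref{lemma:Gaussian_tail}, $|X_k|^2 < 3\log N$. Hence each term $a_k := \frac{|X_k|^2}{N|z-\la_k|^2}$ satisfies $a_k \le C\log N/N$, and
$$
\log \Ov_{11} = \sum_{k=2}^N a_k + O\Big(\sum_k a_k^2\Big) = \sum_{k=2}^N a_k + O\Big(\frac{(\log N)^2}{N}\Big).
$$

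\textbf{Step 2: replace $|X_k|^2$ by its mean.} Write $\sum_k a_k = \frac1N\sum_k \frac{1}{|z-\la_k|^2} + \frac1N\sum_k \frac{|X_k|^2-1}{|z-\la_k|^2}$. Conditionally on the eigenvalues, the second sum is a sum of independent centered variables with weights $c_k = |z-\la_k|^{-2}\le \delta^{-2}$. Its fourth moment is $O(N^{-2})$ uniformly in the eigenvalues on the good event. Borel--Cantelli then gives almost sure convergence to $0$. Alternatively one can use a Bernstein bound, since $|X_k|^2-1$ is subexponential.

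\textbf{Step 3: circular law and the integral.} The function $\phi(w) = |z-w|^{-2}$ is not bounded on $\CC$, so I replace it by a bounded continuous $\tilde\phi$ agreeing with $\phi$ on $D(0,1+\epsilon)$. This changes nothing for $N$ large, almost surely, by Lemma \ref{lemma:spectral_radius}. Lemma \ref{lemma:circular_law} then yields
$$
\frac1N\sum_{k=2}^N \frac{1}{|z-\la_k|^2} \to \frac1\pi\int_{\DD}\frac{\dd^2 w}{|z-w|^2}.
$$
Since $w\mapsto 1/(z-w)$ is analytic on a neighborhood of $\overline{\DD}$, expanding $\frac{1}{z-w} = \sum_{n\ge 0} w^n z^{-n-1}$ and using orthogonality of the $w^n$ gives
$$
\frac1\pi\int_{\DD}\Big|\sum_n w^n z^{-n-1}\Big|^2 \dd^2 w = \sum_{n\ge 0}\frac{|z|^{-2n-2}}{n+1} = -\log\big(1-|z|^{-2}\big).
$$
Exponentiating gives $\Ov_{11} \to \frac{1}{1-|z|^{-2}} = \frac{|z|^2}{|z|^2-1}$.

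\textbf{Main obstacle.} The delicate point is justifying the product representation with independence between the $X_k$ and the conditioned eigenvalues under $\mu_{N-1}^{(z)}$. This should follow from the Schur decomposition's Jacobian, where the strictly upper triangular entries are i.i.d. Gaussian and independent of the diagonal, combined with the partial Schur step that isolates $\la_1$, as in Bourgade--Dubach. A secondary technical point is the uniformity in Step 2 on the almost sure good event, which the fourth-moment Borel--Cantelli argument handles.
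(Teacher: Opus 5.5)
Your proposal is correct and follows essentially the same route as the paper's proof. Both use the Bourgade--Dubach product representation, linearize $\ln \Ov_{11}$ via the Gaussian tail and second-largest-eigenvalue lemmas, apply a fourth-moment law of large numbers to replace $|X_k|^2$ by its mean, and finish with the conditional circular law and the integral $\frac{1}{\pi}\int_{\DD}|z-w|^{-2}\,\dd^2 w=\ln\frac{|z|^2}{|z|^2-1}$. The paper leaves implicit the truncation of the unbounded test function $w\mapsto |z-w|^{-2}$ and the evaluation of this integral; your power-series computation and your reading of $\epsilon>1$ as a typo for small $\epsilon>0$ handle these correctly.
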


\begin{proof}
The distribution of the diagonal overlap $\mathscr{O}_{11}$ was investigated in \cite{BourgadeDubach}, and in particular it was found (Theorem 2.2) that
\begin{equation}\label{eq_distr}
\Ov_{11} \stackrel{d}{=} \prod_{k=2}^N \left( 1 + \frac{|X_k|^2}{N |\la_1 - \la_k|^2} \right),
\end{equation}
where $\la_1, \dots, \la_N$ are the eigenvalues of $G$, and $X_1, \dots, X_k$ are i.i.d. standard complex Gaussian variables. The probability measures associated to these families of variables are denoted by $\PP_{\la}$ and $\PP_{X}$ respectively. 
Taking the logarithm of \eqref{eq_distr} yields
\begin{equation}\label{eq_log_distr}
\ln \Ov_{11} \stackrel{d}{=} \sum_{k=2}^N \ln \left( 1 + \frac{|X_k|^2}{N |\la_1 - \la_k|^2} \right).
\end{equation}
We then use the following bound on the logarithm:
$$
\forall c > \frac12, \quad 
\exists \eta >0 , \quad 
\forall x \in [- \eta, \eta] \quad 
\left| \ln \left( 1 + x \right) - x \right| < c |x|^2.
$$
For our current purpose, any choice of $c$ and $\eta$ will do; for instance one may take $c = 1$ and $\eta = \frac12$. Indeed, thanks to lemmata \ref{lemma:Gaussian_tail} and \ref{lemma:spectral_radius}, we know that there are constants $\gamma > 2$ and $\delta >0$ such that with probability $1$, for $N$ large enough,
$$
\forall k = 1, \dots, N \qquad
|X_k|^2 < \gamma \log N \
\quad \text{and} \quad
\quad |\la_1 - \la_k| > \delta
$$
in particular, with probability $1$, for large enough $N$, the term $\frac{|X_k|^2}{N |\la_1 - \la_k|^2}$ in \eqref{eq_log_distr} is smaller than any prescribed $\eta$, so that we can write:
$$
\left| \ln \Ov_{11} - \sum_{k=2}^N \frac{|X_k|^2}{N |\la_1 - \la_k|^2} \right| < c \sum_{k=2}^N \frac{|X_k|^4}{N^2 |\la_1 - \la_k|^4} < \frac{c \gamma^2 (\log N)^2}{ N \delta^{4} },
$$
a deterministic bound which tends to $0$.
We then consider the random variables
$$
Y_k = \frac{|X_k|^2 - \E_X (|X_k|^2) }{|\la_1 - \la_k|^2}
= \frac{|X_k|^2}{|\la_1 - \la_k|^2} - \frac{1}{|\la_1 - \la_k|^2}
$$
which (with respect to $\PP_X$) are independent and centered with uniformly bounded fourth moment; thus their average $\frac1N \sum_{k=1}^N Y_k$ converges almost surely to $0$, $\PP_X$-almost surely. This gives us:
$$
\sum_{k=2}^N \frac{|X_k|^2}{N |\la_1 - \la_k|^2} 
= \sum_{k=2}^{N} \frac{1}{N |\la_1 - \la_k|^2}
+ o_{\PP_X-a.s.}(1)
$$
which, by Lemma \ref{lemma:circular_law}, converges $\PP_{\lambda}$-almost surely to an integral that one can explicitly compute:
$$
\frac{1}{N} \sum_{k=2}^{N} \frac{1}{|\la_1 - \la_k|^2} \xrightarrow[N \rightarrow \infty]{\PP_{\la}-a.s.}
\frac{1}{\pi} \int_{\DD} \frac{1}{|\la_1 - z|^2} \dd^2 z = \ln \left( \frac{|\la_1|^2}{|\la_1|^2 - 1} \right).
$$
This concludes the proof, as with overall probability $1$, $\Ov_{11}$ is arbitrarily close to $\frac{|\la_1|^2}{|\la_1|^2 - 1}$ for large enough $N$.
\end{proof}

Alternatively, this result for a conditioned outlier can be obtained via the approach used by Fyodorov \cite{Fyodorov}, where an exact finite-$N$ expression of the joint density of $\la_1$ and $\Ov_{11}$ is obtained via partial Schur form and supersymmetry; indeed, the asymptotics of the joint density (2.13) yield convergence to a point mass at the value $\frac{|z|^2}{|z|^2-1}$ in the case of an outlier.
One reason to present the above description of the distribution of $\Ov_{11}$ is that the same method can be applied to eigenvalues of the complex elliptic Ginibre ensemble (as was done in \cite{Thomas2026}) and also in principle to other non-Gaussian but integrable ensembles such as the spherical ensemble or the truncated unitary ensembles \cite{DubachSpherical}.

Note that the diagonal overlap of an outlier eigenvalue is still an unstable object and that, for instance, it does not converge in $L^2$; indeed, a quick verification based on formula \eqref{eq_distr} and the joint density \eqref{joint_density_2} shows that $\Ov_{11}$ has infinite variance even when conditioned on $\la_1$ outside the bulk. However, its expectation is finite, and converges to the same value, as we now prove.

\begin{proposition}[Convergence of the conditional expectation of diagonal overlaps]\label{prop_cond_exp_conv}
In the regime where $|z|>1$,
    \begin{equation}
        \E \left( \Ov_{11} \ | \ \la_1 = z \right) = \frac{|z|^2}{|z|^2-1} + O(N^{-1}).
    \end{equation}
\end{proposition}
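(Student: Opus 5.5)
The plan is to average out the Gaussian variables in \eqref{eq_distr} first, and then evaluate the resulting eigenvalue expectation exactly with the determinantal structure of \eqref{joint_density_2}.

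\textbf{Averaging the Gaussians.} Conditionally on the eigenvalues, the $X_k$ are independent and $\E_X(1+a|X_k|^2)=1+a$. Hence
$$
\E\left(\Ov_{11}\,|\,\la_1=z\right)=\E_{\mu_{N-1}^{(z)}}\left[\prod_{k=2}^N\left(1+\frac{1}{N|z-\la_k|^2}\right)\right].
$$
The density \eqref{joint_density_2} already contains the factor $\prod_k|z-\la_k|^2$. Multiplying the product above into it turns each factor $|z-\la_k|^2$ into $|z-\la_k|^2+\frac1N$. Therefore
$$
\E\left(\Ov_{11}\,|\,\la_1=z\right)=\frac{Z_{N-1}\!\left(\omega_{1/N}\right)}{Z_{N-1}\!\left(\omega_0\right)},
\qquad
\omega_\varepsilon(w)=\left(|w-z|^2+\varepsilon\right)e^{-N|w|^2},
$$
where $Z_{N-1}(\omega)$ denotes the $(N-1)$-point Coulomb-gas partition function with weight $\omega$. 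Note that the denominator is, up to explicit constants, $\rho_1(z)$. By Andreief's identity, $Z_{N-1}(\omega_\varepsilon)=(N-1)!\,\det\big(\int w^i\bar w^j\omega_\varepsilon(w)\,\dd^2w\big)_{0\le i,j\le N-2}$.

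\textbf{Exact evaluation.} Expanding $|w-z|^2+\varepsilon=|w|^2-\bar z w-z\bar w+|z|^2+\varepsilon$ shows that the moment matrix of $\omega_\varepsilon$ with respect to the Gaussian weight is tridiagonal. Its entries are:
\begin{itemize}
\item diagonal entries $h_j\big(\tfrac{j+1}{N}+|z|^2+\varepsilon\big)$, where $h_j=\pi j!/N^{j+1}$;
\item off-diagonal entries $-\bar z h_{j+1}$ and $-zh_{j+1}$.
\end{itemize}
Its determinant $D_m(\varepsilon)$ therefore satisfies a three-term recursion in $m$. For $\varepsilon=0$ this recursion is solved by the partial exponential sums, recovering $D_{N-1}(0)\propto e^{(N-1)}(N|z|^2)$, consistent with $\rho_1$. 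For $\varepsilon=1/N$ I would solve the same recursion, for instance by guessing a closed form as a combination of $e^{(m)}(N|z|^2)$ and $e^{(m-1)}(N|z|^2)$ with coefficients polynomial in $m$, then checking it by induction. This yields an exact finite-$N$ formula expressing the expectation as a ratio of sums of the type $\sum_{k\le N}c_k\frac{(N|z|^2)^k}{k!}$ with explicit weights $c_k$. This is essentially the route of Chalker–Mehlig and of Bourgade–Dubach for the conditional expectation, which I would quote if the formula is available in that form.

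\textbf{Asymptotics.} For $|z|>1$ every sum is dominated by its top terms, since consecutive terms satisfy $\frac{(N|z|^2)^{k-1}/(k-1)!}{(N|z|^2)^k/k!}=\frac{k}{N|z|^2}\approx\frac{1}{|z|^2}<1$ for $k$ near $N$. The sums thus behave like geometric series. Writing $k=N-j$, the ratio becomes $\sum_j c_{N-j}|z|^{-2j}/\sum_j|z|^{-2j}$ up to $O(N^{-1})$ corrections coming from $k/N=1-j/N$. This should produce the leading value $\frac{|z|^2}{|z|^2-1}$. The required expansion of the incomplete gamma function, with an error of order $N^{-1}$ uniform for $|z|$ in compact subsets of $\{|z|>1\}$, is the one provided in the appendix, which I would invoke at this point.

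\textbf{Main obstacle.} The hard step is obtaining the exact closed form for the $\varepsilon=1/N$ determinant, that is, solving the inhomogeneous tridiagonal recursion cleanly. If guessing fails, I would instead compute $\frac{D_m(\varepsilon)}{D_m(0)}$ through the continued-fraction (ratio) recursion $r_m=D_m/D_{m-1}$. I would then show directly that the ratios converge geometrically to their fixed point when $|z|>1$, which would give the $O(N^{-1})$ error without a fully explicit formula.
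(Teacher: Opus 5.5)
Your proposal is correct and follows the paper's proof. The paper quotes the exact finite-$N$ formula $\E(\Ov_{11}\,|\,\la_1=z)=N\big(e^{(N)}(N|z|^2)/e^{(N-1)}(N|z|^2)-|z|^2\big)$ from Chalker--Mehlig and Bourgade--Dubach, then expands it with the appendix lemma on $e^{(N-1)}(Nt)$ for $t=|z|^2>1$, exactly as in your last step. Your ``main obstacle'' is not one: with $x=N|z|^2$ and $E_m=N^m D_m(\varepsilon)/\prod_{j<m}h_j$, the recursion for $\varepsilon=1/N$ reads $E_m=(m+x+1)E_{m-1}-x(m-1)E_{m-2}$, and your ansatz works with $E_m=m!\,\big[(m+1)e^{(m)}(x)-x\,e^{(m-1)}(x)\big]=m!\sum_{k=0}^{m}(m+1-k)\frac{x^k}{k!}$ (versus $m!\,e^{(m)}(x)$ for $\varepsilon=0$), so at $m=N-1$ the ratio is $\sum_{k=0}^{N-1}(N-k)\frac{x^k}{k!}\big/e^{(N-1)}(x)$, identical to the quoted formula.
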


\begin{proof}
For a complex Ginibre matrix, we know from \cite{Akemann_determinantal,BourgadeDubach,ChalkerMehlig} 
that the conditional expectation of the diagonal overlap is given by 
\begin{equation}
\mathbb{E} \left( \Ov_{11} \ \vert \ \lambda_1 = z \right) 
= N \left( \frac{e^{(N)}(N\vert z\vert^2)}{e^{(N-1)}(N\vert z\vert^2)} - \vert z\vert^2 \right),    
\end{equation}
for any $z \in \CC$. This expression was studied in the bulk ($|z|<1$) where it is asymptotically equivalent to $N (1-|z|^2)$.
In the regime where $|z|>1$, we rely on the relevant asymptotic from Lemma \ref{lemma:asymp_partial_sums} (proved in the appendix) and find, with $t=|z|^2$:
\begin{align*}
    \E \left( \Ov_{11} \ | \ \la_1 = z \right) & = N \left( \frac{e^{(N-1)}(N\vert z\vert^2) + \frac{N |z|^2}{N!}}{e^{(N-1)}(N\vert z\vert^2)} - \vert z\vert^2 \right)  \\
    & = N \left( 1- |z|^2 + \frac{N^N |z|^{2N}}{N! e^{(N-1)} (N|z|^2)} \right) \\
    & = N \left( 1- |z|^2 + \frac{1}{\frac{1}{|z|^2-1} - \frac{1}{N} \frac{|z|^2}{(|z|^2-1)^3} + O(N^{-2})} \right) \\
    & = N \left( 1- |z|^2 + (|z|^2-1) \frac{1}{1 - \frac{1}{N} \frac{|z|^2}{(|z|^2-1)^2} + O(N^{-2})} \right) \\
    & = N \left( 1- |z|^2 + (|z|^2-1) \left( 1 + \frac{1}{N} \frac{|z|^2}{(|z|^2-1)^2} + O(N^{-2}) \right) \right) \\
    & = \frac{|z|^2}{|z|^2-1} + O(N^{-1})
\end{align*}
which yields the claim.
\end{proof}

\newpage
\section{Stability of the outlier of a rank-one perturbation}\label{sec_perturbed}

\noindent In this section, we consider the rank-one perturbation
\begin{equation*}
    G(t):= G + t v v^*.
\end{equation*}
where $G=(G_{ij})_{1\leq i,j \leq N}$ is a complex Ginibre matrix with i.i.d entries $G_{ij}\sim \mathcal{N}_{\CC}(0,1/N)$, and $t>1$ is a fixed perturbation parameter.
By general property of unitary invariance of Ginibre matrices\footnote{Indeed, for any given unitary matrix $U$, $G \stackrel{d}{=} UGU^*$ and so with the relevant choice of $U$, one obtains $G(t) \stackrel{d}{=} G + t e_1 e_1^*$.}, we may assume that $v=e_1$, the first vector of the canonical basis of $\CC^N$, so that only the coefficient $G_{11}$ is affected by the perturbation. It is well known \cites{Chafai, Tao2013} that the empirical spectral measure of $G$ converges almost surely to the circular law $\mu_{\DD}:=\frac{1}{\pi}1_{\lvert z \lvert \leq 1}\dd^2z$ and that the spectral radius satisfies $\rho(G):=\max_{\lambda \in \mathrm{Sp}(G)}\lvert \lambda \lvert \rightarrow1$ almost surely. In particular, the unperturbed Ginibre ensemble has no outlier at the macroscopic scale; the rank-one perturbation $te_1e_1^*$ does not affect the global spectral distribution : the empirical spectral measure $\mu_{G(t)}$ of $G(t)$ still converges almost surely to $\mu_{\DD}$. However, for $t>1$, an outlier eigenvalue appears \cite{Tao2013}. More precisely, almost surely, for $N$ large enough, the spectrum of $G(t)$ consists of exactly one eigenvalue located in a neighborhood of $z=t$, while all the remaining eigenvalues lie inside an arbitrary small neighborhood of the unit disk, as represented on Figure \ref{fig:spectrum_rank1}. We denote the outlier eigenvalue by $\lambda_{1}$. \\

\begin{figure}[h!]
    \centering
    \includegraphics[width=0.9\linewidth]{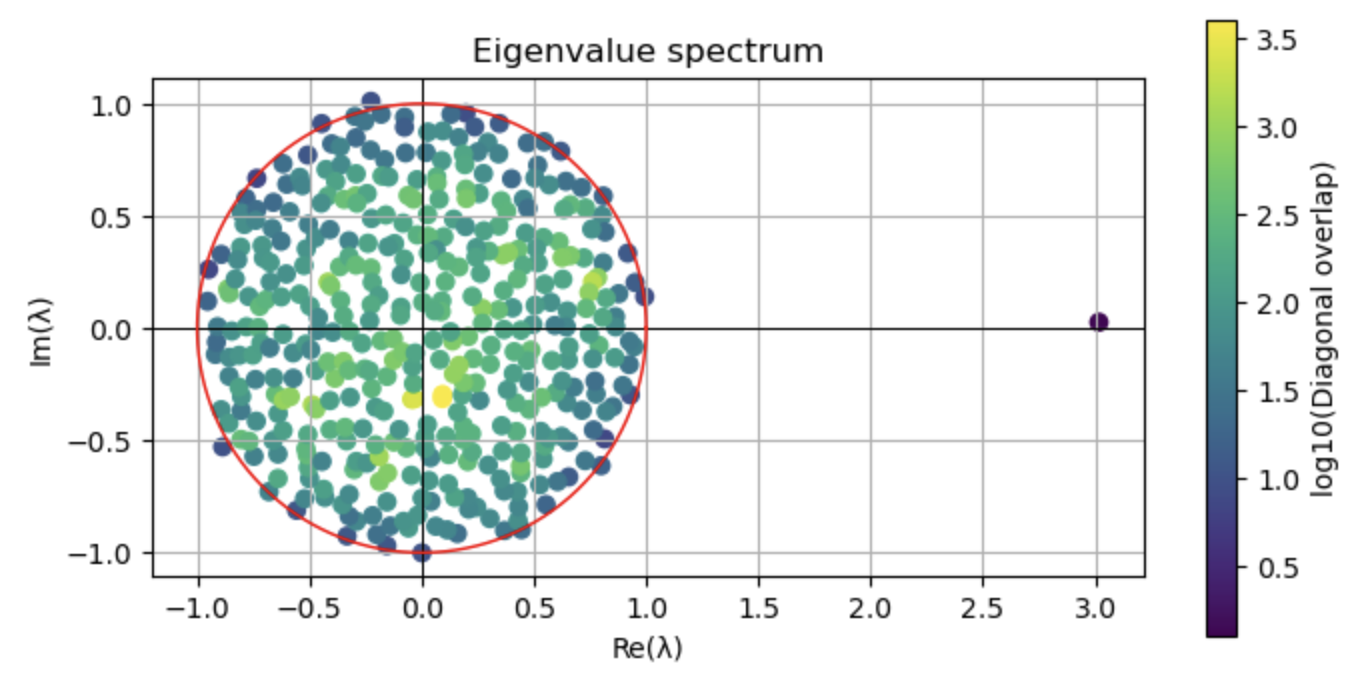}
    \caption{The spectrum of a perturbed Ginibre matrix $G(t) = G+ tvv^*$ with $t = 3$, with the relative size of diagonal overlaps indicated on a color scale. Note that the outlier eigenvalue $\la_1$ is located in a small neighborhood of its expected value, but not exactly at $z=3$.}
    \label{fig:spectrum_rank1}
\end{figure}

Our goal in this section is to determine the limiting behavior of $\Ov_{11}$ as $N\rightarrow \infty$.
By definition we have 
\begin{equation*}
   G= \begin{pmatrix}
        a_N&b_N^*\\
        c_N&D_N
    \end{pmatrix}
\end{equation*}
with $a_N\sim \mathcal{N}_{\C}\left(0,\frac{1}{N}\right), \ b_N,c_N \sim \mathcal{N}_{\C}\left(0,\frac{1}{N}I_{N-1}\right), \ D_N \sim \mathrm{Gin}\left(N-1,\frac{1}{N}\right)$ all independent from each other. \\
\begin{proposition}
    Let $\lambda \notin \mathrm{Sp}(D_N)$, then 
    \begin{equation}\label{eigenvalue_cond}
        \lambda \in \mathrm{Sp}(G(t)) \iff F(\lambda):= \lambda - (a_N+t)- b_N^*\mathbf{R}_{N}(\la)c_N=0
    \end{equation}
    where $\mathbf{R}_N(\lambda):=(\lambda-D_N)^{-1}$
    and the overlap associated to an eigenvalue $\la_1$ is 
    \begin{equation*}
        \Ov_11= \frac{(1+c_N^*\mathbf{R}_{N}(\la_1)^*\mathbf{R}_{N}(\la_1)c_N)(1+b_N^*\mathbf{R}_{N}(\la_1)\mathbf{R}_{N}(\la_1)^*b_N)}{\lvert 1 +b_N^*\mathbf{R}_{N}(\la_1)^2c_N\lvert^2}.
    \end{equation*}
\end{proposition}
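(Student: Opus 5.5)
The plan is to use the block structure of $G(t)=\begin{pmatrix} a_N+t & b_N^* \\ c_N & D_N\end{pmatrix}$ together with a Schur complement argument for the eigenvalue condition. Then I will write the left and right eigenvectors explicitly in block form and compute the diagonal overlap from them.

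\emph{Eigenvalue condition.} Fix $\la\notin\mathrm{Sp}(D_N)$. The Schur complement formula gives
$$\det(\la-G(t))=\det(\la-D_N)\,\bigl(\la-(a_N+t)-b_N^*\mathbf R_N(\la)c_N\bigr).$$
Since $\det(\la-D_N)\neq0$, we get $\la\in\mathrm{Sp}(G(t))$ if and only if $F(\la)=0$, which is \eqref{eigenvalue_cond}.

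\emph{Right eigenvector.} Look for $R=(1,r)^T$ with $r\in\CC^{N-1}$. The second block row of $G(t)R=\la_1 R$ reads $c_N+D_Nr=\la_1 r$, so $r=\mathbf R_N(\la_1)c_N$. The first block row reads $a_N+t+b_N^*r=\la_1$, which is exactly $F(\la_1)=0$. Hence
$$\|R\|^2=1+c_N^*\mathbf R_N(\la_1)^*\mathbf R_N(\la_1)c_N.$$

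\emph{Left eigenvector.} Take $L^*=(1,\ell^*)$ with $L^*G(t)=\la_1L^*$. The second block reads $b_N^*+\ell^*D_N=\la_1\ell^*$, so $\ell^*=b_N^*\mathbf R_N(\la_1)$. Hence
$$\|L\|^2=1+b_N^*\mathbf R_N(\la_1)\mathbf R_N(\la_1)^*b_N.$$

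\emph{Normalization.} These vectors are not yet bi-orthonormal. Since $\Ov_{11}=\|L\|^2\|R\|^2/|\langle L|R\rangle|^2$ is invariant under rescaling $L$ and $R$, it suffices to compute the pairing:
$$\langle L| R\rangle=L^*R=1+b_N^*\mathbf R_N(\la_1)\mathbf R_N(\la_1)c_N=1+b_N^*\mathbf R_N(\la_1)^2c_N.$$
Dividing gives the stated formula for $\Ov_{11}$.

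One check remains: the pairing must be nonzero, which holds when $\la_1$ is a simple eigenvalue. Indeed, $F'(\la)=1+b_N^*\mathbf R_N(\la)^2c_N$, because $\partial_\la\mathbf R_N(\la)=-\mathbf R_N(\la)^2$. The determinant identity shows that simple zeros of $F$ correspond to simple eigenvalues, so $L^*R=F'(\la_1)\neq0$. This is the only point needing care; everything else is routine block algebra.
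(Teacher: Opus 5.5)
Your proposal is correct and follows the paper's proof essentially step for step. Both use the Schur complement factorization $\det(\lambda-G(t))=\det(\lambda-D_N)F(\lambda)$, the explicit block eigenvectors $(1,\mathbf{R}_N(\lambda_1)c_N)$ and $(1,b_N^*\mathbf{R}_N(\lambda_1))$, and division by $|L^*R|^2$. Your extra check that $L^*R=F'(\lambda_1)\neq 0$ for a simple eigenvalue is a welcome detail that the paper leaves implicit.
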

\begin{proof}
     The first part of the proposition is a direct application of Schur's complement formula, which yields
    \begin{equation*}
        \mathrm{det}(\lambda-G)=\mathrm{det}(\lambda-D)F(\lambda).
    \end{equation*}
    This proves the first part of the proposition. 
    \\
    Let us look for a right eigenvector of $G$ for the eigenvalue $\la$, of the form 
    $r=\begin{pmatrix}
        x\\y
    \end{pmatrix}$
    with $x\in \C, \ y\in \C^{N-1}$.
    Then
    \begin{equation*}      
            Gr=\lambda r 
        \Longleftrightarrow
        \begin{cases}
            (a_N+t)x+b_N^*y=\lambda x \\
            c_Nx+D_Ny=\lambda y
        \end{cases}
        \Longleftrightarrow
        \begin{cases}
            F(\lambda)=0\\
            y=\mathbf{R}_{N}(\la)c_Nx
        \end{cases}
        \Longleftrightarrow
        y=\mathbf{R}_{N}(\la)c_Nx.
    \end{equation*}
    Taking $x=1$ we get that $r=\begin{pmatrix}
        1\\
        \mathbf{R}_{N}(\la)c_N
    \end{pmatrix}$
    is a right eigenvector of $G$. Likewise one can see that $l^*=\begin{pmatrix}
        1 & b^*\mathbf{R}_{N}(\la)
    \end{pmatrix}$
    is a left eigenvector. Note that $l^*r=1+b_N^*\mathbf{R}_{N}(\la)^2c_N:=s$. 
    Therefore if we define $l_0^*:=\frac{l^*}{s}$ and $r_0:=r$ we have that 
    \begin{equation*}
         \Ov_{11} = \lvert \lvert L_1 \lvert \lvert^2 \lvert \lvert R_1 \lvert \lvert^2 
         =\frac{(1+c_N^*\mathbf{R}_{N}(\la_1)^*\mathbf{R}_{N}(\la_1)c_N)(1+b_N^*\mathbf{R}_{N}(\la_1)\mathbf{R}_{N}(\la_1)^*b_N)}{\lvert 1 +b_N^*\mathbf{R}_{N}(\la_1)^2c_N\lvert^2}
    \end{equation*}
    which concludes the proof.
 \end{proof}
The proof now reduces to the analysis of the three resolvent quantities
\begin{equation*}
    1+b_N^*\mathbf{R}_{N}(z)\mathbf{R}_{N}(z)^*b_N, \qquad
    1+c_N^*\mathbf{R}_{N}(z)^*\mathbf{R}_{N}(z)c_N, \qquad
    b_N^*\mathbf{R}_{N}^2(z)c_N.
\end{equation*}
We first establish almost sure pointwise convergence of the quadratic terms towards 
\begin{equation*}
    q(z):=\frac{\lvert z \lvert^2}{\lvert z \lvert^2-1},
\end{equation*}
using a multiplicative decomposition obtain from the Schur form together with the circular law. The bilinear term is shown to converge pointwise to $0$ using classical probabilistic arguments. \\
These pointwise convergences are then upgraded to uniform convergence on compact subsets of $\{\lvert z \lvert >1\}.$ The key ingredients are a resolvent bound from the literature, yielding uniform control of $\mathbf{R}_{N}(z)$ on compact sets away from the unit disk, and standard resolvent identities, which imply equicontinuity of the quantities under consideration. We can then promote these pointwise convergences to uniform convergence using a finite-net argument.\\
Finally, since the outlier eigenvalue converges almost surely to $t$ with $ t  >1$, it eventually belongs to a fixed compact neighborhood of $t$, $K \subset \{\lvert z \lvert >1\}$, for instance a closed ball with appropriate radius. The uniform convergence results may therefore be evaluated at $z=\la_1$, yielding the claimed asymptotics for the overlap.\\
We begin with the first technical ingredient of the proof, namely a uniform bound on the resolvent outside the unit disk. 
\begin{lemma}
    Let $K \subset \{\lvert z \lvert >1\}$ a compact. Then there exists $C_K>0$ such that almost surely \begin{equation*}
        \sup_{z\in K}\lvert \lvert \mathbf{R}_{N}(z) \lvert \lvert \leq C_K
    \end{equation*} 
    for all sufficiently large $N$.
\end{lemma}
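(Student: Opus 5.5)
Our plan is to control the resolvent of the $(N-1)\times(N-1)$ matrix $D_N$ outside the unit disk by reducing it to a bound on the smallest singular value of $z-D_N$. Since $\|\mathbf{R}_N(z)\| = 1/s_{\min}(z - D_N)$, it suffices to find $c_K>0$ such that, almost surely, for $N$ large,
$$
\inf_{z\in K} s_{\min}(z-D_N) \geq c_K .
$$
Note that $D_N$ is distributed as $\sqrt{(N-1)/N}$ times a standard Ginibre matrix of size $N-1$, so all results for Ginibre matrices apply up to a factor tending to $1$. Spectral radius control alone is not enough, because $D_N$ is non-normal. We therefore intend to use a bound on the \emph{operator norm of powers}, or more simply a known almost sure resolvent (pseudospectrum) estimate for Ginibre matrices outside the unit disk, such as the ones used in \cite{Tao2013} to locate outliers.

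The first route we will try is a Neumann series. Let $r=\min_{z\in K}|z|>1$ and choose $1<\rho<r$. We claim that, almost surely, for all large $N$ and all $k\geq 1$,
$$
\|D_N^k\| \leq C \rho^k ,
$$
with a deterministic constant $C$. Granting this, for every $z\in K$
$$
\|\mathbf{R}_N(z)\| \leq \sum_{k\ge0}\frac{\|D_N^k\|}{|z|^{k+1}} \leq \frac{C}{r-\rho} =: C_K,
$$
and the bound is automatically uniform over $K$, so no net argument is needed.

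The claimed bound splits into two ranges of $k$.
\begin{itemize}
\item \emph{Small $k$.} For $k\le k_0(N)$ (for instance $k_0 \asymp \log N$), we use moment estimates: $\E\|D_N^k\|^{2p}\le \E\,\mathrm{tr}\,(D_N^k D_N^{*k})^p$, which is bounded by $N\,(C' k)^{\,?}$ times a polynomial factor. The point is that $\|D_N^k\|^{1/k}\to1$, since the limiting norm of $k$-th powers of a circular element is $\sqrt{k+1}$, which is subexponential. Markov's inequality and Borel--Cantelli then give the bound almost surely for all $k\le k_0$ simultaneously.
\item \emph{Large $k$.} For $k>k_0$ we use submultiplicativity, writing $D_N^k$ as a product of blocks $D_N^{k_0}$ and a remainder. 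This needs $\|D_N^{k_0}\|\le \rho^{k_0}$, which again follows from the moment bound once $k_0$ is large enough that $(k_0+1)^{1/2}\le \rho^{k_0}/C$.
\end{itemize}

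The main obstacle is the almost sure control of $\|D_N^{k}\|$ uniformly in $N$ with summable failure probabilities. The moment bounds must be sharp enough, and the moment order $p$ must grow like $\log N$ so that the factor $N^{1/p}$ is harmless. If this becomes too technical, the fallback is to cite directly the known statement (e.g.\ Tao's outlier paper, or Bordenave--Capitaine) that for $|z|\ge1+\epsilon$ the matrix $z-G$ has least singular value bounded below with overwhelming probability, uniformly on compacts. We would then deduce the almost sure version by Borel--Cantelli together with a finite $\eta$-net of $K$, using the Lipschitz bound $|s_{\min}(z-D_N)-s_{\min}(w-D_N)|\le |z-w|$ to pass from the net to all of $K$.
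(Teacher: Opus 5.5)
Your proposal is correct in outline, but your main route differs from the paper's.

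\textbf{The paper's route.} The paper works with the Hermitization $Y_N^z=(z-D_N)(z-D_N)^*$. Its limiting spectral measure is supported on $[e_-(z),e_+(z)]$ with $e_-(z)>0$ for $|z|>1$. A ``no eigenvalues outside the support'' theorem then gives $s_{\min}(z-D_N)\ge\sqrt{m_K/2}$ pointwise almost surely. The bound passes to all of $K$ via a finite net and the $1$-Lipschitz property of $s_{\min}$. This is exactly the structure of your fallback, only with a different source for the pointwise bound.

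\textbf{Your route.} Your Neumann-series argument avoids Hermitization and nets altogether. Once $\|D_N^k\|\le C\rho^k$ for all $k$, you get a bound uniform on the whole region $\{|z|\ge r\}$. The spectral-radius control of $D_N$, which the paper later uses to ensure $\mathrm{Sp}(D_N)\cap K=\emptyset$, also comes for free. The price is a nontrivial input on norms of powers of a non-normal matrix. The paper's input is instead the (equally nontrivial) edge behaviour of a Hermitian model.

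\textbf{Two corrections to your sketch.}

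First, the norm of the $k$-th power of a circular element is $\big((k+1)^{k+1}/k^k\big)^{1/2}\sim\sqrt{e(k+1)}$, not $\sqrt{k+1}$; for $k=1$ it equals $2$. It is still subexponential, so nothing breaks.

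Second, your small-$k$ bullet is not a proof as written: it takes $k_0\asymp\log N$, leaves the moment exponent unspecified, and offers ``$\|D_N^k\|^{1/k}\to1$'' as an imprecise double limit. It is also unnecessary. Fix a single $k_0$ with $\sqrt{e(k_0+1)}<\rho^{k_0}$.
\begin{itemize}
\item Strong convergence (Haagerup--Thorbj{\o}rnsen, applied to $G=(H_1+iH_2)/\sqrt2$ with independent GUE matrices $H_1,H_2$) gives $\|D_N^{k_0}\|\to\|c^{k_0}\|<\rho^{k_0}$ almost surely.
\item The a.s.\ limit $\|D_N\|\to 2$ gives $\|D_N^{j}\|\le 3^{j}$ for $j<k_0$.
\item Writing $k=mk_0+j$ and using submultiplicativity yields $\|D_N^k\|\le 3^{k_0}\rho^k$ for every $k$.
\end{itemize}
This gives the lemma with the deterministic constant $C_K=3^{k_0}/(r-\rho)$.
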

\begin{proof}
    We will prove that there exist $C_K>0$ such that 
    \begin{equation*}
        \inf_{z\in K}s_{\min}(z-D_N)\geq C_K
    \end{equation*}
    which is equivalent to our statement. For $z\in \C$ consider the Hermitian matrix $Y_N^z:=(z-D_N)(z-D_N)^*$. It follows from Proposition 2.2 and equation ($18$a) of \cite{CipolloniErdösSchröder} that the empirical spectral distribution of $Y_N^z$ converges to a deterministic measure supported on an interval $[e_{-}(z), e_{+}(z)]$ such that $e_{-}(z)>0$ whenever $\lvert z \lvert>1.$ Since $K$ is a compact subset of $\{\lvert z \lvert >1\}$, continuity of $e_{-}$ implies that $m_K:=\inf_{z\in K}e_-(z)>0$. Fix $z\in K$. Since the interval $[0,m_K/2]$ lies strictly outside the limiting support of the spectrum of $Y_N^z$, the absence of outliers for $Y_N^z$ (see e.g Theorem 3 of \cite{Vallet} or Theorem 1.1 of \cite{BaiSilverstein}) implies that almost surely, for all $N$ large enough, $\la_{\min}(Y_N^z)\geq m_K/2$. Equivalently, $s_{\min}(z-D_N)\geq \sqrt{m_K/2}$. To obtain a uniform estimate let $\delta:=\frac{1}{2}\sqrt{m_K/2}$ and choose a finite $\delta$-net $\{z_1,\ldots,z_M\}\subset K$. By the previous pointwise estimate, almost surely, for all $N$ large enough,
    \begin{equation*}
        s_{\min}(z_j-D_N)\geq \sqrt{m_K/2}
    \end{equation*}
    for every $j=1,\ldots,M.$ Now using the fact that $s_{\min}$ is $1$-Lipschitz (see for instance Weyl's perturbation theorem in \cite[Cor. III.2.6]{Bhatia})
    \begin{equation*}
        s_{\min}(z-D_N)\geq s_{\min}(z_j-D_N)-\lvert z-z_j\lvert \geq \delta.
    \end{equation*}
    Therefore, almost surely, for all sufficiently large $N$, 
    \begin{equation*}
        \inf_{z \in K}s_{\min}(z-D_N)\geq \delta
    \end{equation*}
    which concludes the proof.
\end{proof}

We now prove the key probabilistic representation underlying the proof of the pointwise convergence.
\begin{lemma}
   Set $Q_N(z):=1+b_N^*\mathbf{R}_{N}(z)\mathbf{R}_{N}(z)^*b_N$ for $z \notin Sp (D_N)$ and denote by $\mu_1,\ldots,\mu_{N-1}$ the eigenvalues of $D_N$. Then 
   \begin{equation*}
       Q_N(z) \overset{}{=} \prod_{k=1}^{N-1}\left(1+\frac{\lvert X_k\lvert^2}{N\lvert z- \mu_k \lvert^2}\right)
   \end{equation*}
   where $X_1,\ldots,X_{N-1}$ are i.i.d. standard complex Gaussian random variables, independent of $\mu_1,\ldots,\mu_{N-1}.$
\end{lemma}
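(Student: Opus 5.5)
The identity is in distribution. The plan is to use the Schur decomposition of $D_N$ together with the independence and unitary invariance of $b_N$. Write $D_N = U T U^*$ with $U$ unitary and $T$ upper triangular, with diagonal entries $\mu_1,\dots,\mu_{N-1}$. Since $b_N \sim \mathcal{N}_{\C}(0,\frac1N I_{N-1})$ is independent of $D_N$, the vector $\tilde b := U^* b_N$ is again $\mathcal{N}_{\C}(0,\frac1N I_{N-1})$, and it is independent of $T$. Then
$$
Q_N(z) = 1 + \tilde b^* (z-T)^{-1}(z-T)^{-*}\tilde b = 1 + \| (z-T)^{-*}\tilde b\|^2 .
$$
So it suffices to understand the squared norm of $w := (z-T)^{-*}\tilde b$, which solves the lower triangular system $(z-T)^* w = \tilde b$.

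The key step is a recursion on the size. I would mimic the argument of \cite{BourgadeDubach} (Theorem 2.2), which is exactly the mechanism behind \eqref{eq_distr}. Take $T$ in a Schur form where the last eigenvalue $\mu_{N-1}$ is isolated, and write
$$
z-T=\begin{pmatrix} z-T' & -u\\ 0 & z-\mu_{N-1}\end{pmatrix}.
$$
Solving blockwise gives $w=(w', w_{N-1})$ with $w' = (z-T')^{-*}\tilde b'$ and
$$
w_{N-1} = \frac{\tilde b_{N-1} + u^* w'}{\overline{z-\mu_{N-1}}}.
$$
Then
$$
1+\|w\|^2 = (1+\|w'\|^2)\left(1 + \frac{|w_{N-1}|^2}{1+\|w'\|^2}\right).
$$
The claim to establish is that, conditionally on $T'$, $\mu_{N-1}$ and $\tilde b'$, the quantity $N|\tilde b_{N-1}+u^*w'|^2/(1+\|w'\|^2)$ is distributed as $|X|^2$ with $X$ standard complex Gaussian, independent of everything else. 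Iterating this then gives the product formula. The natural way to get it is to order the Schur form with $\mu_{N-1}$ last and use the known fact that for Ginibre matrices the strictly upper triangular part of the Schur form consists of i.i.d. $\mathcal{N}_{\C}(0,\frac1N)$ entries, independent of the eigenvalues. With that, $u \sim \mathcal{N}_{\C}(0,\frac1N I)$ is independent of $(T',\tilde b)$. Conditionally on $w'$, the sum $\tilde b_{N-1}+u^*w'$ is then centered complex Gaussian with variance $\frac1N(1+\|w'\|^2)$, which gives exactly $|X_{N-1}|^2/N$. Its law does not depend on the conditioning, so $X_{N-1}$ is independent of $(T',\tilde b')$.

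To set this up, I would choose the Schur form with eigenvalue ordering uniformly random, or in any fixed exchangeable rule. The eigenvalue distribution is symmetric, so the final product formula is unaffected, and the Gaussianity of the upper triangular entries holds for the Ginibre matrix $D_N$ (this is the classical Ginibre/Edelman computation of the Schur decomposition's Jacobian). The induction is then on the dimension: the block $T'$ is again the Schur form of an $(N-2)\times(N-2)$ upper triangular matrix with Gaussian off-diagonal entries, and $\tilde b'$ is Gaussian and independent. The base case $1\times 1$ reads $1+|\tilde b_1|^2/|z-\mu_1|^2 = 1+|X_1|^2/(N|z-\mu_1|^2)$.

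The main obstacle is justifying the Schur-form distributional facts: that the strictly upper triangular entries are i.i.d. Gaussian with variance $1/N$ and independent of the ordered eigenvalues, and that $U$ can be taken independent of $T$. I would cite the standard Jacobian of the Schur decomposition, as used in \cite{BourgadeDubach}. Once that is in place, the recursion is routine Gaussian conditioning. The only care needed is to keep the independence of each successive $X_k$ from all previously revealed variables, which follows because at each stage its conditional law is a fixed standard Gaussian.
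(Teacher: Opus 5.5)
Your proposal is correct and follows essentially the same route as the paper. The paper also uses the Schur decomposition $D_N=UTU^*$, replaces $b_N$ by $U^*b_N$, and runs a one-row-at-a-time recursion on the triangular system, with the new term $\tilde b_{k}+u^*w'$ conditionally centered Gaussian of variance $\frac1N(1+\|w'\|^2)$; this matches the paper's definition of $X_{k+1}$ through the filtration $\mathcal{F}_k$.

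One remark: you do not need $U$ to be independent of $T$. The independence of $U^*b_N$ from $T$ already follows from $b_N$ being independent of $D_N$ together with the unitary invariance of its Gaussian law.
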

\begin{proof}
    Let $D_N=UTU^*$ be a Schur decomposition of $D_N$, where $T$ is upper triangular and has diagonal entries $\mu_1,\ldots, \mu_{N-1}$. Since $b_N$ is independent of $D_N$ and has a unitary invariant Gaussian $\beta:=U^*b$ has law $\mathcal{N}_{C}(0,N^{-1}I)$ and is independent of $T$ and moreover 
    \begin{equation*}
        b_N^*\mathbf{R}_{N}(z)\mathbf{R}_{N}(z)^*b_N=\beta ^*(z-T)^{-1}(z-T)^{-*}\beta.
    \end{equation*}
    Thus it is enough to prove the identity for $T$ and $\beta$. \\
    Let $T_k $ be the $k\times k$ upper-left principal block of $T$, and let $\beta_k$ be the vector made of the first $k$ coordinates of $\beta.$ Define $R_k(z):=(z-T_k)^{-1}$ and $Q_k(z):=1+\beta_k^* R_k(z)R_k(z)^*\beta_k.$ Then $Q_0(z)=1$ and $Q_{N-1}(z)$ is the quantity of interest. Write 
    \begin{equation*}
        T_{k+1}= \begin{pmatrix}
        T_k&\tau_k\\
        0&\mu_{k+1}
    \end{pmatrix}, \ \beta_{k+1}=\begin{pmatrix}
        \beta_k\\
        \eta_{k+1}
    \end{pmatrix}.
    \end{equation*}
    Conditionally on the spectrum, the vectors $\tau_k$ have independent complex Gaussian entries of variance $1/N$ and are independent of the previous columns. Also, the variables $\eta_k$ are independent complex Gaussians of variance $1/N$, independent of $T$. Set $y_k:=R_k(z)^*\beta_k.$ Then $Q_k(z)=1+ \lvert \lvert y_k \lvert \lvert ^2.$ Write 
    \begin{equation*}
        y_{k+1}=\begin{pmatrix}
        y_k\\
        \gamma_{k+1}
    \end{pmatrix}.
    \end{equation*}
    Since $(z-T_{k+1})^*y_{k+1}=\beta_{k+1}$ and 
    \begin{equation*}
        (z-T_{k+1})^*= \begin{pmatrix}
        (z-T_k)^*&0\\
        -\tau_k^*& \overline{z}-\overline{\mu_{k+1}}
    \end{pmatrix}
    \end{equation*}
    we obtain that 
    \begin{equation*}
        \gamma_{k+1}=\frac{\eta_{k+1}+\tau_k^*y_k}{\overline{z}-\overline{\mu_{k+1}}}.
    \end{equation*}
    Therefore, 
    \begin{equation*}
        Q_{k+1}(z)=1+\lvert \lvert y_{k+1}\lvert \lvert^2=Q_k(z)+ \frac{\lvert \eta_{k+1}+\tau_k^*y_k\lvert^2}{\lvert z-\mu_{k+1}\lvert^2}
    \end{equation*}
    Now define $X_{k+1}:=\sqrt{\frac{N}{Q_k(z)}}(\eta_{k+1}+\tau_k^*y_k).$ Then 
    \begin{equation*}
        Q_{k+1}(z)=Q_k(z)\left(1+ \frac{\lvert X_{k+1}\lvert^2}{N\lvert z-\mu_{k+1}\lvert^2}\right).
    \end{equation*}
    We now identify the law of the $X_k$. Let $\mathcal{F}_k:=\sigma(\mu_1,\ldots,\mu_{N-1},T_k,\beta_k).$ Then $y_k$ and $Q_k(z)$ are $\mathcal{F}_k$-measurable. Moreover, $\eta_{k+1}$ and $\tau_k$ are independent of $\mathcal{F}_k$ and 
    \begin{equation*}
        \eta_{k+1}+\tau_k^*y_k \mid \mathcal{F}_k \sim \mathcal{N}_{\C}\left(0,\frac{1+\lvert \lvert y_k \lvert \lvert^2}{1}\right)=\mathcal{N}_{\C}\left(0,\frac{Q_k(z)}{N}\right).
    \end{equation*}
    Hence $X_{k+1}\mid \mathcal{F}_k \sim \mathcal{N}_{C}(0,1).$ The conditional law does not depend on $\mathcal{F}_k$, therefore $X_{k+1}$ is independent of $\mathcal{F}_k$. In particular, $X_{k+1}$ is independent of the spectrum and of $X_1,\ldots,X_k$. By induction $X_1,\ldots,X_{N-1}$ are i.i.d. standard complex Gaussians independent of $\mu_1,\ldots,\mu_{N-1}$. Iterating the recurrence from $Q_0(z)=1$, we obtain 
    \begin{equation*}
        Q_{N-1}(z)=\prod_{k=1}^{N-1}\left(1+\frac{\lvert X_k \lvert ^2}{N\lvert z-\mu_k \lvert^2}\right)
    \end{equation*}
    which concludes the proof.
\end{proof}
Note that an identical argument applies to $1+c_N^*\mathbf{R}_{N}(z)^*\mathbf{R}_{N}(z)c_N$, yielding the same product representation. We now prove the pointwise convergence of $Q_N(z)$ towards $q(z)$.
\begin{lemma}
    Let $z\in \C$ be such that $\lvert z \lvert >1$. Then 
    \begin{equation*}
        Q_N(z)\xrightarrow[N\rightarrow\infty]{a.s}q(z).
    \end{equation*}
\end{lemma}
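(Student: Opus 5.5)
The plan is to mirror the proof of Theorem \ref{thm_cond_as_conv}, now starting from the product representation of the previous lemma,
$$
Q_N(z) = \prod_{k=1}^{N-1}\left(1+\frac{|X_k|^2}{N|z-\mu_k|^2}\right).
$$
Here $\mu_1,\dots,\mu_{N-1}$ are the eigenvalues of $D_N \sim \mathrm{Gin}(N-1,\frac1N)$. This is an unconditioned Ginibre matrix, up to the harmless rescaling $\sqrt{(N-1)/N}\to 1$. Two standard facts are therefore available almost surely. First, its spectral radius tends to $1$ \cite{Chafai,Tao2013}. Second, its empirical spectral measure $\frac{1}{N-1}\sum_k \delta_{\mu_k}$ converges weakly to the circular law. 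Fix $|z|>1$ and choose $\epsilon>0$ with $1+2\epsilon<|z|$. Then almost surely, for $N$ large enough, $|z-\mu_k|>\delta:=|z|-1-\epsilon>0$ for all $k$. One subtlety is that the $X_k$ of the lemma depend on $N$ (and on $z$); they form a triangular array. For each fixed $N$, however, they are i.i.d. standard complex Gaussians independent of the $\mu_k$, and only their law matters for the argument. Since the statement concerns almost sure convergence of $Q_N(z)$ itself, I would make sure the estimates below are summable in $N$, so that Borel--Cantelli applies to the actual variables rather than to a coupled copy.

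\emph{Step 1: linearization.} By Lemma \ref{lemma:Gaussian_tail} (a union bound over $k\le N$ gives a summable probability $N\cdot N^{-\gamma}$), almost surely $\max_k|X_k|^2<\gamma\log N$ eventually. Each term $x_k=\frac{|X_k|^2}{N|z-\mu_k|^2}$ is then at most $\gamma\log N/(N\delta^2)\to 0$. Using $|\ln(1+x)-x|\le x^2$ for small $x$, we get
$$
\Big|\ln Q_N(z)-\sum_{k}\frac{|X_k|^2}{N|z-\mu_k|^2}\Big|\le \frac{\gamma^2(\log N)^2}{N\delta^4}\to0 .
$$

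\emph{Step 2: averaging out the Gaussians.} Conditionally on the spectrum, set $Y_k=(|X_k|^2-1)/|z-\mu_k|^2$. These are independent and centered, with $|z-\mu_k|^{-2}\le\delta^{-2}$ on the good event. A fourth-moment (or Bernstein/Hanson--Wright) bound gives $\PP\big(|\frac1N\sum_k Y_k|>\varepsilon\big)\le C\varepsilon^{-4}N^{-2}$, which is summable. By Borel--Cantelli, $\frac1N\sum_k Y_k\to0$ almost surely. I would handle the triangular-array issue here through this summable tail bound, which is exactly why a second-moment bound would not suffice.

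\emph{Step 3: deterministic limit.} It remains to show
$$
\frac1N\sum_{k=1}^{N-1}\frac{1}{|z-\mu_k|^2}\longrightarrow \frac1\pi\int_{\DD}\frac{\dd^2 w}{|z-w|^2}=\ln\frac{|z|^2}{|z|^2-1}.
$$
The function $w\mapsto|z-w|^{-2}$ is not bounded continuous on $\CC$. It is, however, continuous and bounded on $D(0,1+\epsilon)$, where all $\mu_k$ eventually lie. Replacing it by a bounded continuous function that agrees with it there, weak convergence to the circular law gives the limit. The integral itself is computed as in Theorem \ref{thm_cond_as_conv}, by expanding $|z-w|^{-2}$ in powers of $w/z$ and integrating term by term:
$$
\frac1\pi\int_{\DD}\frac{\dd^2 w}{|z-w|^2}=\sum_{n\ge0}\frac{1}{(n+1)|z|^{2n+2}}=-\ln\!\big(1-|z|^{-2}\big).
$$
Exponentiating the combined estimates from Steps 1--3 yields $Q_N(z)\to q(z)$ almost surely.

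The main obstacle is Step 2: ensuring that almost sure convergence holds for the genuine, $N$-dependent Gaussians produced by the Schur recursion. The summable fourth-moment bound is what resolves it. Everything else is a direct transcription of the conditioned case.
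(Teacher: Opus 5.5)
Your proposal is correct and follows the same route as the paper: you linearize $\ln Q_N(z)$ via the product representation, average out the Gaussian fluctuations $\frac1N\sum_k (|X_k|^2-1)/|z-\mu_k|^2$, and conclude with the circular law for $D_N$ and the explicit value $\frac1\pi\int_{|w|<1}|z-w|^{-2}\,\mathrm{d}^2 w=\ln q(z)$. Your summable bounds (the union bound for $\max_k|X_k|^2$ and the fourth-moment Borel--Cantelli estimate conditional on the spectrum) and your replacement of $w\mapsto|z-w|^{-2}$ by a bounded continuous function agreeing with it on $D(0,1+\epsilon)$ make rigorous the triangular-array and unbounded-test-function points that the paper's proof passes over quickly.
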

\begin{proof}
    Define $a_k(z):=\frac{1}{\lvert z-\mu_k \lvert^2}$ and $S_N(z):=\frac{1}{N}\sum_{k=1}^{N-1}a_k(z)$. We first prove that $\log Q_N(z)-S_N(z)\rightarrow 0$ a.s, then we use the almost sure circular law to get the desired convergence. By Lemma 12 the following equality in law holds true 
    \begin{equation*}
        \log Q_N(z)=\sum_{k=1}^{N-1}\log \left(1+\frac{a_k(z)\lvert X_k \lvert^2}{N}\right).
    \end{equation*}
    Since $\lvert z \lvert >1$ we may choose $\varepsilon>0$ such that $\lvert z\lvert >1+\varepsilon$ and by the almost sure convergence of the spectral radius of $D_N$ to $1$, almost surely, for all $N$ large enough, $\rho(D_N)\leq 1 + \varepsilon/2.$ Therefore for all $k=1,\ldots,N-1$ we have $a_k(z)\leq 4/\varepsilon^2$ almost surely for all $N$ large enough. Using the fact that $0 \leq x-\log(1+x)\leq x^2$ for all $x\geq 0$ we get 
    \begin{equation*}
       \left \lvert \log Q_N(z)-\frac{1}{N}\sum_{k=1}^{N-1}a_k(z)\lvert X_k \lvert^2 \right\lvert \leq \frac{1}{N^2}\sum_{k=1}^{N-1}a_k(z)^2\lvert X_k \lvert^4.
    \end{equation*}
    Since $a_k(z)$ is eventually uniformly bounded and 
    \begin{equation*}
        \frac{1}{N}\sum_{k=1}^{N-1}\lvert X_k \lvert^4 \rightarrow \E\lvert X_k \lvert^4<\infty, \ \textit{almost surely,}
    \end{equation*}
    we obtain that
    \begin{equation*}
        \log Q_N(z) - \frac{1}{N}\sum_{k=1}^{N-1}a_k(z)\lvert X_k \lvert^2 \rightarrow0, \ \textit{almost surely.}
    \end{equation*}
    Then, 
    \begin{equation*}
        \frac{1}{N}\sum_{k=1}^{N-1}a_k(z) \lvert X_k \lvert^2= S_N(z) + \frac{1}{N}\sum_{k=1}^{N-1}a_k(z)(\lvert X_k \lvert^2-1). 
    \end{equation*}
    Since the weights $a_k(z)$ are eventually uniformly bounded, and the variables $\lvert X_k \lvert ^2-1$ are independent, centered, and have finite fourth moment we can use Lemma 4 and deduce that
    \begin{equation*}
        \frac{1}{N}\sum_{k=1}^{N-1}a_k(z)(\lvert X_k \lvert^2-1) \rightarrow0, \ \textit{almost surely}
    \end{equation*}
    and thus $\log Q_N(z) -S_N(z)\rightarrow 0$ almost surely. Now by the almost sure circular law applied to the eigenvalues of $D_N$, 
    \begin{equation*}
        S_N(z)=\frac{1}{N}\sum_{k=1}^{N-1}\frac{1}{\lvert z-\mu_k \lvert^2}\rightarrow\int_{\DD}\frac{1}{\lvert z-u\lvert^2}\frac{d^2u}{\pi}=\log q(z)
    \end{equation*}
    which concludes the proof.
\end{proof}
We now upgrade this pointwise convergence to a uniform one on a compact outside the unit disk.
\begin{proposition}
     Let $K$ be a compact such $K \subset \{\lvert z \lvert >1\}$. Then, a.s,
     \begin{equation*}
         \sup_{z\in K}\lvert Q_N(z)-q(z)\lvert \rightarrow0.
     \end{equation*}
 \end{proposition}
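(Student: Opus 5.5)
We upgrade the pointwise almost sure convergence of the previous lemma to uniform convergence on $K$ by an equicontinuity plus finite-net argument, as announced in the outline above. We work directly with the quadratic form $Q_N(z)=1+\|\mathbf{R}_N(z)^*b_N\|^2$, not with its product representation. That representation only holds in law for each fixed $z$, whereas the uniform statement concerns the actual random function $z\mapsto Q_N(z)$.

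\textbf{Step 1 (a.s.\ Lipschitz bound).} First fix a slightly larger compact $K'\subset\{|z|>1\}$ containing a neighborhood of $K$ (convexity is not needed). By the uniform resolvent lemma there is $C_{K'}$ such that almost surely, for $N$ large, $\sup_{z\in K'}\|\mathbf{R}_N(z)\|\le C_{K'}$. Moreover $\|b_N\|^2\to1$ almost surely by the strong law of large numbers, since $N\|b_N\|^2$ is a sum of $N-1$ i.i.d.\ $\mathrm{Exp}(1)$ variables. The resolvent identity
$\mathbf{R}_N(z)-\mathbf{R}_N(w)=(w-z)\mathbf{R}_N(z)\mathbf{R}_N(w)$ then gives, for $z,w\in K$ (both resolvents being bounded there),
\begin{equation*}
|Q_N(z)-Q_N(w)|\le \bigl(\|\mathbf{R}_N(z)^*b_N\|+\|\mathbf{R}_N(w)^*b_N\|\bigr)\|(\mathbf{R}_N(z)-\mathbf{R}_N(w))^*b_N\|\le 2C_K^3\|b_N\|^2|z-w|.
\end{equation*}
So, almost surely, for $N$ large, $Q_N$ is $L$-Lipschitz on $K$ with a deterministic constant $L$ (say $L=4C_K^3$). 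The limit $q(z)=|z|^2/(|z|^2-1)$ is continuous, hence uniformly continuous and Lipschitz on $K$.

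\textbf{Step 2 (net argument).} Fix $\varepsilon>0$ and choose a finite $\eta$-net $S\subset K$ with $\eta$ small enough that $(L+\mathrm{Lip}_K(q))\eta<\varepsilon/2$. Since $S$ is finite, the previous lemma gives an almost sure event on which $Q_N(x)\to q(x)$ for every $x\in S$. For $z\in K$, pick $x\in S$ with $|z-x|<\eta$. Then
\begin{equation*}
|Q_N(z)-q(z)|\le |Q_N(z)-Q_N(x)|+|Q_N(x)-q(x)|+|q(x)-q(z)|<\varepsilon
\end{equation*}
for $N$ large, so $\limsup_N\sup_K|Q_N-q|\le\varepsilon$ almost surely. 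Taking $\varepsilon=1/m$ and intersecting countably many almost sure events concludes the proof.

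\textbf{Main obstacle.} The delicate point is the pointwise step being used here. The preceding lemma was proved via an equality in law of the product representation, so strictly it yields convergence in law at each fixed $z$, not almost sure convergence of the actual $Q_N(z)$. I would first try to check that the proof there really gives almost sure convergence. Failing that, I would redo the pointwise step directly. Conditionally on $D_N$, the quantity $\|\mathbf{R}_N(z)^*b_N\|^2$ is a Gaussian quadratic form with mean $\frac1N\mathrm{tr}\,\mathbf{R}_N\mathbf{R}_N^*$ and fourth central moment $O(N^{-2})$, using the resolvent bound. Borel--Cantelli then gives almost sure concentration around that mean. The convergence $1+\frac1N\mathrm{tr}(\mathbf{R}_N\mathbf{R}_N^*)(z)\to q(z)$ then follows, since the product representation pins down the almost sure limit as $q(z)$, provided the traces converge almost surely. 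This last convergence follows from the singular value convergence of $z-D_N$ used in the resolvent lemma, together with the bound $\|\mathbf{R}_N(z)\|\le C_K$, which makes $x\mapsto 1/x$ bounded and continuous on the relevant spectrum.
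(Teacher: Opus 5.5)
Your proof is correct and follows the paper's route: the almost sure uniform resolvent bound together with $\|b_N\|\to 1$ gives an equi-Lipschitz estimate for $Q_N$ on $K$ via the resolvent identity, and a finite-net argument upgrades pointwise to uniform convergence. The paper fixes a single countable dense set $\mathcal{D}\subset K$ rather than a net for each $\varepsilon=1/m$, which is equivalent. Your caveat about the pointwise lemma is a fair rigor point, though milder than it looks: the proof of the product representation actually builds $X_k=X_k^{(N)}(z)$ pathwise from the Schur form and $U^*b_N$, so the almost sure argument goes through once the strong-law steps are replaced by fourth-moment Borel--Cantelli bounds for triangular arrays, and your quadratic-form fallback is also valid.
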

 \begin{proof}
     Let $\mathcal{D}\subset K$ dense countable. Then, a.s, 
     \begin{equation*}
         \forall z \in \mathcal{D}, \ Q_N(z)\rightarrow q(z). 
     \end{equation*}
     Now we work on the almost sure event 
     \begin{equation*}
         \Omega_0:=\{ \forall z \in \mathcal{D}, \ Q_N(z)\rightarrow q(z)\}\cap\{\lvert \lvert b\lvert \lvert \rightarrow1\}\cap \{\textit{for all $N$ large enough} \  \sup_{z\in K}\lvert \lvert \mathbf{R}_{N}(z)\lvert \lvert \leq C_K\}.
     \end{equation*}
     Using the resolvent identity $\mathbf{R}_{N}(z)-\mathbf{R}_{N}(w)=(w-z)\mathbf{R}_{N}(z)\mathbf{R}_{N}(w)$ we get that for all $N$ large enough 
     \begin{equation*}
         \lvert \lvert \mathbf{R}_{N}(z)-\mathbf{R}_{N}(w) \lvert \lvert \leq C_K^2 \lvert z-w \lvert.
     \end{equation*}
     We denote $A(z):=\mathbf{R}_{N}(z)\mathbf{R}_{N}(z)^*$. Then noting that 
     \begin{equation*}
         A(z) -A(w)= (\mathbf{R}_{N}(z)-\mathbf{R}_{N}(w))\mathbf{R}_{N}(z)^* + \mathbf{R}_{N}(w)(\mathbf{R}_{N}(z)^*-\mathbf{R}_{N}(w)^*)
     \end{equation*}
     we obtain that 
     \begin{equation*}
         \lvert \lvert A(z) -A(w) \lvert \lvert \leq 2C_K^3 \lvert z-w\lvert.
     \end{equation*}
     Now for all $N$ large enough we have $\lvert \lvert b \lvert \lvert \leq 2$. Hence 
     \begin{align*}
         \lvert Q_N(z)-Q_N(w)\lvert &=\lvert b^*(A(z)-A(w))b\lvert \\
         &\leq \lvert \lvert b\lvert \lvert^2 \lvert \lvert A(z)-A(w)\lvert \lvert \\
         &\leq 8C_K^3 \lvert z -w \lvert.
     \end{align*}
     Therefore the $(Q_N)$ are equi-Lipschitz on $K$, a.s, for all $N$ large enough. The limit function $q$ is continuous, so uniformly continuous on $K$. Fix $\varepsilon>0$ and let $\eta>0$ such that $8C_K^3\eta\leq \frac{\varepsilon}{3}$ and $\lvert q(z)-q(w) \lvert \leq \frac{\varepsilon}{3}$ whenever $\lvert z-w\lvert \leq \eta$.
     Since $\mathcal{D}$ is dense and $K$ is a compact, there exists a finite $\eta$-net meaning $z_1,\ldots,z_M\in \mathcal{D}$ such that for all $z\in K$ there exists a $j$ such that $\lvert z- z_j\lvert \leq \eta.$ For all $N$ large enough we have 
     \begin{equation*}
         \max_{1\leq j\leq M}\lvert Q_N(z_j)-q(z_j) \lvert \leq \frac{\varepsilon}{3}.
     \end{equation*}
     Let $z \in K$ and $z_j$ such that $\lvert z-z_j \lvert \leq \eta$, we then have 
     \begin{align*}
         \lvert Q_N(z) -q(z)\lvert &\leq \lvert Q_N(z) - Q_N(z_j)\lvert + \lvert Q_N(z_j)-q(z_j)\lvert + \lvert q(z_j)-q(z) \lvert \\
         &\leq \frac{\varepsilon}{3}+\frac{\varepsilon}{3}+\frac{\varepsilon}{3}.
     \end{align*}
     Therefore, a.s, for all $N$ large enough 
     \begin{equation*}
         \sup_{z\in K}\lvert Q_N(z)-q(z) \lvert \leq \varepsilon
     \end{equation*}
     which concludes the proof.
 \end{proof}
By the same argument one also obtains $\sup_K \lvert 1+ c_N^*\mathbf{R}_{N}(z)^*\mathbf{R}_{N}(z)c_N-q(z) \lvert \rightarrow 0$ almost surely.\\
We now prove the convergence towards $0$ of the bilinear term $b_N^*\mathbf{R}_{N}(z)c_N$.
\begin{proposition}
     Let $K$ be a compact such $K \subset \{\lvert z \lvert >1\}$. Then, a.s,
     \begin{equation*}
         \sup_{z\in K}\lvert b_N^*\mathbf{R}^2_{N}(z)c_N\lvert \rightarrow0.
     \end{equation*}
\end{proposition}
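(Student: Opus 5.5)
The plan is to mirror the structure used for $Q_N$: first prove pointwise almost sure convergence to $0$, then upgrade it to uniform convergence on $K$ by the equi-Lipschitz and finite-net argument. Set $B_N(z):=b_N^*\mathbf{R}_N^2(z)c_N$ and fix $z\in K$.

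The key observation is that $b_N$ and $c_N$ are independent of each other and of $D_N$. Conditionally on $(D_N,c_N)$, the vector $w:=\mathbf{R}_N^2(z)c_N$ is fixed, so $B_N(z)=b_N^*w$ is a centered complex Gaussian with variance $\|w\|^2/N$. On the almost sure event where $\sup_K\|\mathbf{R}_N\|\le C_K$ and $\|c_N\|\le 2$ for $N$ large (using the resolvent lemma and the law of large numbers for $\|c_N\|^2$), we get $\|w\|^2\le 4C_K^4$.

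To obtain almost sure convergence, I will work on the event $E_N:=\{\|\mathbf{R}_N(z)\|\le C_K,\ \|c_N\|\le 2\}$, which is measurable with respect to $(D_N,c_N)$. Using the Gaussian tail for $b_N^*w$, one has
\[
\PP\bigl(|B_N(z)|>\varepsilon,\ E_N\bigr)\le \exp\!\left(-\frac{N\varepsilon^2}{4C_K^4}\right).
\]
This bound is summable in $N$. Borel--Cantelli, combined with the fact that $E_N$ holds eventually almost surely, then gives $B_N(z)\to0$ almost surely. Alternatively, a fourth-moment bound $\E|b_N^*w|^4\lesssim \|w\|^4/N^2$ would suffice. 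One subtlety is that the resolvent lemma controls $\mathbf{R}_N$ only eventually, not for all $N$; working with the truncated events $E_N$ circumvents this.

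For the uniform upgrade, take a countable dense set $\mathcal{D}\subset K$ and work on the event where $B_N(z)\to0$ for all $z\in\mathcal{D}$, the resolvent bound holds, and $\|b_N\|,\|c_N\|\le2$ eventually. The identity
\[
\mathbf{R}_N^2(z)-\mathbf{R}_N^2(w)=(\mathbf{R}_N(z)-\mathbf{R}_N(w))\mathbf{R}_N(z)+\mathbf{R}_N(w)(\mathbf{R}_N(z)-\mathbf{R}_N(w))
\]
together with $\|\mathbf{R}_N(z)-\mathbf{R}_N(w)\|\le C_K^2|z-w|$ yields $|B_N(z)-B_N(w)|\le 8C_K^3|z-w|$. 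The limit is $0$, which is trivially uniformly continuous, so the same $\varepsilon/3$ finite-net argument as in the previous proposition concludes the proof.

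The main, and fairly mild, obstacle is the probabilistic step. The resolvent bound is only an eventual almost sure statement, so the conditional Gaussian tail must be combined with Borel--Cantelli on $(D_N,c_N)$-measurable good events rather than applied unconditionally. Once this is arranged, the rest is routine.
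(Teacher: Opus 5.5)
Your proposal is correct and follows essentially the same route as the paper. For fixed $z$ you use a conditional Gaussian tail bound of order $\exp(-cN\varepsilon^2)$ with Borel--Cantelli, and you then upgrade to uniform convergence via the equi-Lipschitz estimate $|b_N^*(\mathbf{R}_N^2(z)-\mathbf{R}_N^2(w))c_N|\le 8C_K^3|z-w|$ and a finite net taken from a countable dense subset of $K$. The differences are minor: you condition on $(D_N,c_N)$ instead of $(D_N,b_N)$, and you apply the ordinary Borel--Cantelli lemma to the truncated events $\{|b_N^*\mathbf{R}_N^2(z)c_N|>\varepsilon\}\cap E_N$ rather than a conditional Borel--Cantelli given $\sigma((D_N,b_N)_N)$, which is slightly cleaner since it only uses independence at each fixed $N$; your identity for $\mathbf{R}_N^2(z)-\mathbf{R}_N^2(w)$ is also the correct form of the garbled one in the paper.
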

\begin{proof}
     Let $z \in K$ fixed. We first prove that $Y_N(z)\rightarrow0$ a.s. Define $\mathcal{F}:=\sigma((D_N,b_N)_N)$. Conditionally on $\mathcal{F}, \ c_N \sim \mathcal{N}_{\C}(0,N^{-1}I)$ and $Y_N(z)$ is a centered Gaussian of variance $\sigma_N^2(z)=\frac{1}{N}b_N^*B_N(z)B_N(z)^*b_N$ where $B_N(z):=\mathbf{R}^2_{N}(z)$. Now, a.s, for all $N$ large enough we have 
     \begin{equation*}
         \sigma_N^2(z)\leq \frac{1}{N}\lvert \lvert b_N\lvert \lvert ^2 \lvert \lvert B_N(z)\lvert \lvert^2\leq \frac{C}{N}.
     \end{equation*}
     Therefore for all $\delta>0$ we have, a.s, for all $N$ large enough, 
     \begin{equation*}
         \PP(\lvert Y_N(z)\lvert> \delta \ \lvert \ \mathcal{F})\leq \exp(-CN). 
     \end{equation*}
     Consequently,a.s,
     \begin{equation*}
         \sum_N \PP(\lvert Y_N(z)\lvert> \delta \ \lvert \ \mathcal{F}) < \infty.
     \end{equation*}
     Therefore $\PP(\lvert Y_N(z)\lvert> \delta  \ \textit{i.o}\ \lvert \ \mathcal{F})=0$  and then by tower property $\PP(\lvert Y_N(z)\lvert> \delta  \ \textit{i.o})=0$. Hence a.s $Y_N(z)\rightarrow0$. Now to extend this to an almost sure uniform convergence on $K$ notice that 
     \begin{equation*}
         \mathbf{R}^2_{N}(z)-\mathbf{R}^2_{N}(w)=\mathbf{R}^2_{N}(z)(\mathbf{R}^2_{N}(z)-\mathbf{R}^2_{N}(z))+(\mathbf{R}^2_{N}(z)-\mathbf{R}^2_{N}(w))\mathbf{R}_{N}(w)
     \end{equation*}
     and therefore, a.s, for all $N$ large enough
     \begin{equation*}
         \lvert \lvert \mathbf{R}^2_{N}(z)-\mathbf{R}^2_{N}(w) \lvert \lvert \leq 2C_K^3\lvert z-w \lvert.
     \end{equation*}
     In particular, a.s, for all $N$ large enough
     \begin{equation*}
         \lvert Y_N(z) -Y_N(w)\lvert =\lvert b_N^*(\mathbf{R}^2_{N}(z)-\mathbf{R}^2_{N}(z))c_N\lvert \leq \lvert \lvert b_N\lvert\lvert \ \lvert \lvert c_N \lvert \lvert  \ 2C_K^3\lvert z-w \lvert \leq 8C_K^3 \lvert z-w \lvert.
     \end{equation*}
     Fix $\varepsilon>0$ and $\eta$ such that $8C_K^3 \eta \leq \frac{\varepsilon}{2}$. Let $z\in K$  and let $j$ such that $\lvert z-z_j\lvert \leq \eta$. We then have, a.s, for all $N$ large enough
     \begin{equation*}
         \lvert Y_N(z) \lvert \leq \lvert Y_N(z_j)\lvert + \lvert Y_N(z)-Y_N(z_j)\lvert\leq \frac{\varepsilon}{2}+ \frac{\varepsilon}{2}.
     \end{equation*}
     Hence, a.s, for all $N$ large enough, $\sup_K \lvert Y_N(z)\lvert \leq \varepsilon$ which concludes the proof.
     \end{proof}
     We can now combine the previous estimates to obtain the almost sure convergence of the diagonal overlap associated with the outlier eigenvalue.
\begin{theorem}
    Let $G(t)=G+te_1e_1^*$ where $t>1$ and $G$ is a complex Ginibre matrix with entries of variance $1/N$. Let $\la_1$ be the outlier eigenvalue of $G(t)$. Then 
    \begin{equation*}
        \Ov_{11}\xrightarrow[N\rightarrow\infty]{a.s}\left(\frac{t^2}{t^2-1}\right)^2.
    \end{equation*}
\end{theorem}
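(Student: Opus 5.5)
The plan is to plug the outlier into the explicit overlap formula from the Schur-complement proposition, and then apply the uniform convergence results proved above on a fixed compact neighborhood of $t$. Fix $r>0$ small enough that $K:=\overline{D(t,r)}\subset\{|z|>1\}$; this is possible since $t>1$. As recalled at the beginning of this section (see \cite{Tao2013}), almost surely $\la_1\to t$. Hence almost surely $\la_1\in K$ for all large $N$.

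Next I need that $\la_1\notin\mathrm{Sp}(D_N)$, so that the formula applies at $z=\la_1$. This follows from the resolvent lemma: almost surely, for large $N$, $\inf_{z\in K}s_{\min}(z-D_N)>0$, so $K$ contains no eigenvalue of $D_N$. Therefore, almost surely for large $N$,
\begin{equation*}
\Ov_{11}=\frac{\bigl(1+c_N^*\mathbf{R}_N(\la_1)^*\mathbf{R}_N(\la_1)c_N\bigr)\,Q_N(\la_1)}{\lvert 1+b_N^*\mathbf{R}_N(\la_1)^2c_N\rvert^2}.
\end{equation*}
One subtlety must be handled here. The product representation of $Q_N(z)$ is only an equality in law for each fixed $z$, whereas $\la_1$ is random and depends on $b_N$ and $c_N$. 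This is exactly why the uniform statements are needed. The pointwise almost sure convergence was proved for each fixed $z$; intersecting over a countable dense set and using equi-Lipschitz continuity then gives almost sure uniform convergence of the actual quantities $Q_N$ and its $c_N$ analogue on $K$. These uniform statements concern the true random functions, not their distributional copies, so they may be evaluated at the random point $\la_1$.

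Then I bound each factor at $\la_1$ by its supremum over $K$ plus the variation of the limit. For the first factor,
\begin{equation*}
\lvert Q_N(\la_1)-q(t)\rvert\le \sup_{z\in K}\lvert Q_N(z)-q(z)\rvert+\lvert q(\la_1)-q(t)\rvert .
\end{equation*}
This tends to $0$ almost surely, by the uniform proposition together with the continuity of $q$ and $\la_1\to t$. The $c_N$ factor is treated the same way. For the denominator, the bilinear proposition gives $\sup_K\lvert b_N^*\mathbf{R}_N^2c_N\rvert\to0$, so the denominator tends to $1$. Combining these,
\begin{equation*}
\Ov_{11}\to q(t)^2=\left(\frac{t^2}{t^2-1}\right)^2 .
\end{equation*}

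The main obstacle is the rigorous use of $\la_1\to t$ almost surely, together with the identification of $\la_1$ as the unique zero of $F$ in $K$. If the almost sure version from \cite{Tao2013} is not directly available, I would derive it from the uniform results. Since $b_N^*\mathbf{R}_N(z)c_N$ converges uniformly to $0$ on $K$ (by the same Gaussian-conditioning and net argument used for $\mathbf{R}_N^2$), and $a_N\to0$ almost surely, we get that $F(z)\to z-t$ uniformly on $K$. By Hurwitz's theorem (equivalently Rouché's theorem), $F$ then has exactly one zero in $D(t,r)$ for large $N$, and this zero converges to $t$ as $r$ can be taken arbitrarily small. This identifies $\la_1$ and gives its convergence, which completes the argument.
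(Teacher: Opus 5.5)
Your proposal is correct and follows the paper's proof essentially step for step: fix a small closed disk $K$ around $t$, use $\la_1\to t$ and the resolvent bound to evaluate the Schur-complement overlap formula at $\la_1$, then conclude from the uniform convergence on $K$ of the two quadratic forms to $q$, the uniform convergence of the bilinear term to $0$, and the continuity of $q$. Your two additions go beyond what the paper writes and are both sound: you explain why uniformity is needed, since the product representation only holds for deterministic $z$, and you derive $\la_1\to t$ via Hurwitz/Rouch\'e from $F\to z-t$ uniformly on $K$, whereas the paper simply cites this from \cite{Tao2013}.
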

\begin{proof}
    Let $K:=D(t,\varepsilon)$ with $\varepsilon$ being small enough such that $K\subset \{\lvert z \lvert >1\}$. Now almost surely, for all $N$ large enough 
    \begin{equation*}
        \la_1 \in K \ \textit{and} \ \mathrm{Sp}(D_N)\cap K=\emptyset.
    \end{equation*}
    By Proposition $10$
    \begin{equation*}
        \Ov_{11}= \frac{(1+c_N^*\mathbf{R}_{N}(\la_1)^*\mathbf{R}_{N}(\la_1)c_N)(1+b_N^*\mathbf{R}_{N}(\la_1)\mathbf{R}_{N}(\la_1)^*b_N)}{\lvert 1 +b_N^*\mathbf{R}_{N}(\la_1)^2c_N\lvert^2}.
    \end{equation*}
    Now by the previous uniform convergence results, almost surely, for all $N$ large enough
    \begin{equation*}
        \Ov_{11}=(q(\la_1)+o(1))^2+o(1).
    \end{equation*}
    Since $q$ is continuous on $K$ and $\la_1 \rightarrow t$ almost surely 
    \begin{equation*}
        q(\la_1)\rightarrow q(t)=\frac{t^2}{t^2-1}
    \end{equation*}
    which concludes the proof.
\end{proof}

\section{Conclusion: some comments on methods and phenomena}\label{sec_conclusion}

We have studied the distribution of diagonal overlaps for outliers with two slightly different techniques. In the first case, we have used the decomposition of the law coming from the Schur form, and in the second case we have used Schur complements to obtain a characterization of the outlier, and uniform bounds that hold with high probability outside the bulk. Each method is taylored to the parameters of the problem: indeed, the first technique cannot easily be applied to the second case, because the distribution of the Schur form of a perturbed matrix is not so simple to characterize; and the second technique cannot be used for the first problem, because conditioning on an event of measure zero tilts the distributions and jeopardizes the uniform bounds (which only hold on an event of high probability -- but then, presumably not under the conditioning).  This is also the reason why there is no contradiction between the two results, which describe two related but distinct regimes. 

All recent works on overlaps and their distribution in integrable ensembles suggest an interpretation of the dynamical version of the `freezing at the edge' phenomenon \cite{FreezingEdge}, and an answer to this natural question: why are eigenvalues more stable at the edge than inside the bulk? The key insight seems to be that the diagonal overlaps (which always encapsulates the stability of the associated eigenvalues, as shown by Proposition \ref{prop1} as well as other related results in the literature \cites{BourgadeDubach,GrelaWarchol,ChalkerMehlig}) depend multiplicatively on the distance between eigenvalues, with the most relevant contributions coming from the closest neighbors. This is exactly true for the complex Ginibre ensemble, and can be expected to still be the case at leading order for a variety of reasonable distributions. As a consequence, eigenvalue instability is roughly a multiplicative quantity of neighboring eigenvalues, whose overall scale depends chiefly on the behavior of nearest neighbors. The difference between the bulk and the edge, at microscopic level, being essentially the fact that the nearest neighbors tend to occupy a half-space rather than the whole neighborhood, it is to be expected as a rule of thumb that the diagonal overlaps at the edge scale like the square root of those in the bulk. In the context of the complex Ginibre ensemble, this heuristic is exactly true, with diagonal overlaps having order $N$ in the bulk, $\sqrt{N}$ at the edge, and order $1$ for outliers as established in this paper. These scales reflect the fact that an edge eigenvalue has roughly half as much close neighbors compared to a bulk eigenvalue, and that an outlier eigenvalue typically has no close neighbor at all. 

\newpage
\section{Appendix: asymptotics of the incomplete Gamma function}\label{appendix}

\subsection{Estimates on the partial sums of the exponential}\label{appendix_A}

We recall the definition of the incomplete Gamma function:
$$\Gamma(N, x) = \int_{x}^{\infty} s^{N-1} e^{-s} ds.$$
This function appears naturally when dealing with statistics of the complex Ginibre ensemble; in Section~\ref{sec_conditional}, we rely on a precise asymptotic expansion of the incomplete Gamma function, specifically in the range $x = Nt$, with $t>1$, corresponding to the squared radius of an outlier.

\begin{lemma}[Asymptotic expansion of $\Gamma(N,Nt)$] For any fixed $t>1$, the following estimate holds
$$\Gamma(N,Nt) = (Nt)^{N-1} e^{-Nt} \left( \frac{t}{t-1} - \frac{1}{N} \frac{t^2}{(t-1)^3} + O(N^{-2})\right).$$
\end{lemma}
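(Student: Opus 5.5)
The plan is to write $\Gamma(N,Nt)$ as a Laplace-type integral centred at the lower endpoint $s=Nt$. Since $t>1$, this endpoint lies to the right of the maximum of $s^{N-1}e^{-s}$, so the integrand decays exponentially from there. Substituting $s = Nt + u$ with $u \ge 0$ gives
$$
\Gamma(N,Nt) = (Nt)^{N-1} e^{-Nt} \int_0^\infty \exp\Big( (N-1)\log\big(1+\tfrac{u}{Nt}\big) - u \Big)\, \dd u ,
$$
and it remains to show that the integral equals $\frac{t}{t-1} - \frac1N \frac{t^2}{(t-1)^3} + O(N^{-2})$. Write $a := 1 - \frac1t = \frac{t-1}{t} > 0$ and $\psi_N(u) := (N-1)\log(1+\frac{u}{Nt}) - u$.

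\textbf{Step 1 (tail).} The bound $\log(1+x)\le x$ gives $\psi_N(u) \le \frac{(N-1)u}{Nt} - u \le -a u$ for all $u\ge0$. Hence the contribution of $u > N^{1/4}$ is at most $\int_{N^{1/4}}^\infty e^{-au}\,\dd u = O(e^{-aN^{1/4}})$, which is $O(N^{-2})$. Likewise, the model integrals used below have tails beyond $N^{1/4}$ that are $O(N^{-2})$.

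\textbf{Step 2 (local expansion on $0\le u\le N^{1/4}$).} Taylor expansion gives
$$
\psi_N(u) = -a u - \frac{u}{Nt} - \frac{u^2}{2Nt^2} + r_N(u),\qquad |r_N(u)| \le C\,\frac{u^2+u^3}{N^2}.
$$
Set $\varepsilon_N(u) := -\frac{u}{Nt} - \frac{u^2}{2Nt^2} + r_N(u)$. On this range $|\varepsilon_N(u)| \le C N^{-1/2}$ is bounded, so $e^{\varepsilon_N} = 1 + \varepsilon_N + O(\varepsilon_N^2)$ with $\varepsilon_N^2 = O\big((u^2+u^4)/N^2\big)$. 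Consequently, the integrand equals
$$
e^{-au}\Big(1 - \frac{u}{Nt} - \frac{u^2}{2Nt^2}\Big) + e^{-au}\,O\Big(\frac{1+u^4}{N^2}\Big).
$$
The error term integrates to $O(N^{-2})$ against $e^{-au}$.

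\textbf{Step 3 (Gamma integrals).} Extend the integrals back to $[0,\infty)$, which costs only exponentially small errors, and use $\int_0^\infty u^k e^{-au}\,\dd u = k!/a^{k+1}$:
$$
\frac1a = \frac{t}{t-1}, \qquad \frac{1}{t}\cdot\frac{1}{a^2} = \frac{t}{(t-1)^2}, \qquad \frac{1}{2t^2}\cdot\frac{2}{a^3} = \frac{t}{(t-1)^3}.
$$
The $1/N$ correction is therefore
$$
-\frac1N\Big(\frac{t}{(t-1)^2} + \frac{t}{(t-1)^3}\Big) = -\frac1N\,\frac{t\big((t-1)+1\big)}{(t-1)^3} = -\frac1N\,\frac{t^2}{(t-1)^3},
$$
which is exactly the claimed expansion. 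Note that both corrections enter with a minus sign: $\psi_N(u) + au$ is negative at first order, so the factor $e^{\varepsilon_N}$ lowers the integral.

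\textbf{Main obstacle.} The main difficulty is bookkeeping the remainders uniformly in $u$. Two things must hold: the cubic Taylor remainder and the squared first-order terms must contribute only $O(N^{-2})$, and no $u$-growth must spoil integrability. Cutting at $u\le N^{1/4}$, combined with the global bound $\psi_N \le -au$ from Step 1, handles both.
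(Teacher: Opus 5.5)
Your proof is correct, and it takes a genuinely different route from the paper's. The paper rescales $s=Nu$ and writes $\Gamma(N,Nt)=N^N\int_t^\infty u^{-1}e^{N\varphi(u)}\,\mathrm{d}u$ with $\varphi(u)=\ln u-u$. Using $u^{-1}e^{N\varphi}=\frac{1}{N(1-u)}\,\partial\left(e^{N\varphi}\right)$, it integrates by parts twice. The two boundary terms at $u=t$ then give the two terms of the expansion exactly, in the equivalent normalization $N^{N-1}t^Ne^{-Nt}$. The only remainder is $\frac{1}{N^2}\int_t^\infty \partial\left(\frac{u}{(1-u)^3}\right)e^{N\varphi}\,\mathrm{d}u$. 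It is bounded by $\sup_{u\ge t}\left|\partial\left(\frac{u}{(1-u)^3}\right)\right|$ times $\int_t^\infty u^N e^{-Nu}\,\mathrm{d}u\le K(t)N^{-1}e^{N\varphi(t)}$, and this last estimate uses the same inequality $\left(1+\frac{w}{Nt}\right)^N\le e^{w/t}$ as your Step 1. You instead run a Laplace (Watson-lemma) expansion at the endpoint: you Taylor-expand the exponent, cut at $u\le N^{1/4}$, and integrate polynomials against $e^{-au}$. The integration-by-parts route needs no cutoff and no $\varepsilon_N^2$ bookkeeping, every step is an exact identity, and each further integration by parts produces the next term of the series. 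Your route works directly in the normalization $(Nt)^{N-1}e^{-Nt}$ of the statement and shows where the $1/N$ coefficient comes from. The split $\frac{t^2}{(t-1)^3}=\frac{t}{(t-1)^2}+\frac{t}{(t-1)^3}$ separates the effect of the exponent being $N-1$ rather than $N$ from that of the quadratic term of $\log\left(1+\frac{u}{Nt}\right)$. The price is the uniform remainder control you correctly identified as the main obstacle.
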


\begin{proof}
A change of variable $Nu = s$ gives
$$\Gamma(N, Nt) = N^N \int_{t}^{\infty} u^{N-1} e^{- N u} du = N^N  \int_{t}^{\infty} u^{-1} e^{ N ( \ln(u) - u ) } du.$$
Let us denote $\varphi(u) = \ln(u) -u$, $\varphi'(u) = \frac{1-u}{u}$ and perform an integration by parts:

$$N^{-N} \Gamma(N, Nt) = \int_{t}^{\infty} \frac{1}{1-u} \frac{1}{N} \partial \left( e^{ N ( \ln(u) - u ) } \right) du
= \frac{1}{N} \frac{1}{t-1} e^{N \varphi(t)} - \frac{1}{N} \int_{t}^{\infty} \frac{1}{(1-u)^2}  e^{ N ( \ln(u) - u ) } du,$$
which isolates in fact the first term of the asymptotic expansion; one more integration by part yields:
\begin{align*}
    \int_{t}^{\infty} \frac{1}{(1-u)^2}  e^{ N ( \ln(u) - u ) } du &= \frac{1}{N}\int_{t}^{\infty} \frac{u}{(1-u)^3}  \partial  \left( e^{ N ( \ln(u) - u ) } \right) du \\
    &= \frac{1}{N} \frac{t}{(t-1)^3} e^{ N ( \ln(t) - t ) } - \frac{1}{N}\int_{t}^{\infty} \partial \left( \frac{u}{(1-u)^3} \right)  e^{ N ( \ln(u) - u ) } du,
\end{align*}
which gives us the second term. To bound the last term, we notice first that 
$$\frac{d}{du} \left( \frac{u}{(1-u)^3} \right) = \frac{1 + 2u}{(1-u)^4} = \frac{2}{(u-1)^3} + \frac{3}{(u-1)^4},$$ 
which is bounded by
$$C(t) = \frac{2}{(t-1)^3} + \frac{3}{(t-1)^4}$$
as $u \geq t > 1$. We then study the remaining integral term.
\begin{align*}
    \int_{t}^{\infty} u^N e^{-Nu} du &= e^{-Nt}\int_{0}^{\infty} (v+t)^N e^{-Nv} dv\\
    &= N^{-1} e^{-Nt}\int_{0}^{\infty} (t + \frac{w}{N})^N e^{-w} dw\\
    &= N^{-1} e^{N(ln(t) -t)}\int_{0}^{\infty} (1 + \frac{w}{Nt})^N e^{-w} dw\\
    &\leq N^{-1} e^{N(ln(t) -t)}\int_{0}^{\infty} e^{w(\frac{1}{t}-1)}dw
\end{align*}
Since $t>1$ in our regime, the last term is the integral of a negative exponential : it is a positive constant $K(t)$. We have 
$$\left| - \frac{1}{N}\int_{t}^{\infty} \frac{d}{du} \left( \frac{u}{(1-u)^3} \right)  e^{ N ( \ln(u) - u ) } du \right| \leq C(t) K(t) \frac{1}{N^2} e^{N(ln(t) -t)}$$
Hence,
\begin{align*}
    \Gamma(N, Nt) &= N^{N}\left( \frac{1}{N} \frac{1}{t-1} e^{N(ln(t)-t)} - \frac{1}{N} (\frac{1}{N} \frac{t}{(t-1)^3} e^{ N ( \ln(t) - t ) } - e^{N(ln(t) -t)}O(\frac{1}{N^2} ))\right)\\
    &= N^{N-1} t^{N} e^{-Nt}\left(  \frac{1}{t-1} - \frac{1}{N} \frac{t}{(t-1)^3}  + O(N^{-2} )\right)
\end{align*}
which is the claim.
\end{proof}

\begin{lemma}[Asymptotic behavior of partial sums of the exponential]\label{lemma:asymp_partial_sums}
In the regime $t>1$: 
$$\frac{N!}{(Nt)^N} e^{(N-1)} (Nt) =  \frac{1}{t-1} - \frac{1}{N} \frac{t}{(t-1)^3} + O(N^{-2})$$
\end{lemma}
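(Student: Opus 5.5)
The plan is to reduce the partial sum of the exponential to the upper incomplete Gamma function, and then read the claimed expansion off the previous lemma. The classical identity, valid for any integer $N \geq 1$ and $x \geq 0$, is
$$
e^{(N-1)}(x) = \sum_{k=0}^{N-1} \frac{x^k}{k!} = \frac{e^{x}}{(N-1)!}\, \Gamma(N,x).
$$
I would prove it by induction on $N$, using the integration by parts $\Gamma(N+1,x) = x^N e^{-x} + N\,\Gamma(N,x)$ and the base case $\Gamma(1,x)=e^{-x}$. Alternatively, one can differentiate both sides in $x$: both sides agree at $x=0$, and the derivative of the right-hand side telescopes to $e^{(N-2)}(x)$.

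Next, I would substitute $x = Nt$ and multiply by $\frac{N!}{(Nt)^N}$:
$$
\frac{N!}{(Nt)^N}\, e^{(N-1)}(Nt) = \frac{N\, e^{Nt}}{(Nt)^N}\,\Gamma(N,Nt).
$$
Inserting the asymptotic expansion of $\Gamma(N,Nt)$ from the preceding lemma, the prefactor $(Nt)^{N-1}e^{-Nt}$ cancels against $\frac{N e^{Nt}}{(Nt)^N}$ up to the factor $N\cdot\frac{1}{Nt}=\frac1t$. This leaves
$$
\frac{1}{t}\left( \frac{t}{t-1} - \frac{1}{N}\frac{t^2}{(t-1)^3} + O(N^{-2}) \right) = \frac{1}{t-1} - \frac{1}{N}\frac{t}{(t-1)^3} + O(N^{-2}),
$$
which is exactly the claim. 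Since $t$ is fixed, the error term stays $O(N^{-2})$ after division by $t$.

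There is no real obstacle here. The only point requiring care is bookkeeping of the normalizations. The preceding lemma is stated with $(Nt)^{N-1}$, while the last line of its proof reads $N^{N-1}t^N e^{-Nt}\bigl(\frac{1}{t-1} - \dots\bigr)$; these two forms agree, so either can be used. I would also check the index shift between $e^{(N-1)}$ and $(N-1)!$ versus $N!$ to make sure the factor $N$ is accounted for correctly. As a sanity check, the leading term can be confirmed independently: the sum $\sum_{k\le N-1} x^k/k!$ with $x = Nt > N$ is dominated by its last terms, which behave like a geometric series of ratio $1/t$. This gives $\frac{(Nt)^{N-1}}{(N-1)!}\cdot\frac{1}{1-1/t}$, and hence the leading term $\frac{1}{t-1}$ after normalization.
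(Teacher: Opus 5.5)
Your proposal is correct and follows the paper's proof exactly: the paper also writes $e^{(N-1)}(x) = \frac{e^{x}}{(N-1)!}\Gamma(N,x)$, substitutes $x = Nt$, and divides the expansion of $\Gamma(N,Nt)$ from the preceding lemma by $t$. The paper only quotes the identity as classical, so your induction via $\Gamma(N+1,x) = x^N e^{-x} + N\Gamma(N,x)$ adds a justification. Your derivative variant should be phrased as both sides solving $f' = f - \frac{x^{N-1}}{(N-1)!}$ with $f(0)=1$, since as written it presupposes the identity.
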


\begin{proof}
We first write classically 
$$
e^{(N-1)} (x)
= \frac{e^x}{(N-1)!} \int_{x}^{\infty} s^{N-1} e^{-s} d s
= \frac{e^x}{(N-1)!} \Gamma(N,x).
$$
So in the regime $x=Nt$ with $t>1$, we find
$$
e^{(N-1)} (Nt) = \frac{e^{Nt}}{(N-1)!} \Gamma(N,Nt) = \frac{ N^{N-1}t^N }{(N-1)!}   \left( \frac{1}{t-1} - \frac{1}{N} \frac{t}{(t-1)^3} + O(N^{-2})\right)
$$
and finally
$$
\frac{N!}{(Nt)^N} e^{(N-1)} (Nt) =  \frac{1}{t-1} - \frac{1}{N} \frac{t}{(t-1)^3} + O(N^{-2}).
$$

\end{proof}

\end{document}